\documentclass[11pt]{amsart}

\title[New free boundary minimal annuli of revolution]{New free boundary minimal annuli of revolution in the 3-sphere}
\author{Manuel Ruivo de Oliveira}
\address{Department of Mathematics, The University of British Columbia, 1984 Mathematics Road, Vancouver, V6T 1Z2, BC, Canada}
\email{m.oliveira@math.ubc.ca}

\usepackage{amsmath}
\usepackage{amsfonts}
\usepackage{amsthm}
\usepackage[shortlabels]{enumitem}
\usepackage{graphicx}
\usepackage[unicode=true,linktocpage,linkbordercolor={0.5 0.5 1},citebordercolor={0.5 1 0.5},linkcolor=blue]{hyperref}

\newcommand{\bb}[1]{\mathbb{#1}}
\DeclareMathOperator{\II}{II}

\newtheorem{theorem}{Theorem}[section]
\newtheorem{lemma}[theorem]{Lemma}
\newtheorem{proposition}[theorem]{Proposition}

\theoremstyle{definition}
\newtheorem{remark}[theorem]{Remark}
\theoremstyle{plain}

\thanks{This work was supported by the NSERC Discovery Grant 22R81123.}

\subjclass[2020]{53A10, 53C42}

\begin{document}
	
\begin{abstract}
	We rigorously establish the existence of many free boundary minimal annuli with boundary in a geodesic sphere of $\bb{S}^3$. These arise as compact subdomains of a one-parameter family of complete minimal immersions of $\bb{R} \times \bb{S}^1$ into $\bb{S}^3$ described by do Carmo and Dajczer \cite{doCarmo_dajczer_1983}. While the immersed free boundary minimal annuli we exhibit may in general fail to be embedded or contained in a geodesic ball, we show that there is at least a one-parameter family of embedded examples that are contained in geodesic balls whose radius may be less than, equal to or greater than $\frac{\pi}{2}$. After explaining the connection to Otsuki tori \cite{otsuki_1970}, we establish lower bounds on the number of immersed free boundary minimal annuli contained in each Otsuki torus in terms of the corresponding rational number. Finally, we show that some of the recent work of Lee and Seo \cite{lee_seo_2023} on isoperimetric inequalities and of Lima and Menezes \cite{lima_menezes_2023} on index bounds extends to geodesic balls equal to or larger than a hemisphere.
\end{abstract}

\maketitle
	
\section{Introduction}\label{sec:introduction}

A fundamental step in the study of any class of mathematical objects is the search for an initial collection of examples. It is from this initial collection that we may establish new conjectures as well as look for counterexamples to any future conjectures.

While there are now many examples of free boundary minimal surfaces in the Euclidean ball \cite{martin_li_2020}, some more and some less explicit, the situation is starkly different in geodesic balls of curved ambient spaces such as the sphere $\bb{S}^n$ or hyperbolic space $\bb{H}^n$. Here there are results of Fraser and Schoen \cite{fraser_schoen_2015} on uniqueness, of Li and Xiong \cite{li_xiong_2018} on a gap theorem, of Freidin and McGrath \cite{freidin_mcgrath_2019, freidin_mcgrath_2020} on area bounds, of Lee and Seo \cite{lee_seo_2023} on isoperimetric inequalities, and more. However, apart from the totally geodesic free boundary disks, the only other examples of free boundary minimal surfaces in geodesic balls of $\bb{S}^n$ and $\bb{H}^n$ appearing in the literature are a collection of rotational annuli (see \cite{li_xiong_2018}) thought to be free boundary but for which a rigorous proof is still missing.

The purpose of this paper is to remedy this situation by providing an abundance of parametrized examples in the sphere $\bb{S}^3$, all of them immersed free boundary minimal annuli of revolution. Our point of departure is the one-parameter family of minimal immersions of $\bb{R} \times \bb{S}^1$ into $\bb{S}^3$ described by do Carmo and Dajczer \cite{doCarmo_dajczer_1983}, which includes the countable collection of minimal tori known as Otsuki tori \cite{otsuki_1970}. For the purposes of existence, we find it useful to consider not only free boundary surfaces contained in a geodesic ball, but more generally free boundary surfaces with boundary in a geodesic sphere, whether they are contained in a ball or not. Indeed, we find many immersed free boundary minimal annuli with boundary in a geodesic sphere that are not contained in a geodesic ball (see, for example, the second plot in Figure \ref{fig:otsuki_2_3_phi0_0_fbma}). Of those that are contained in a ball, we show that some are contained in a ball smaller than a hemisphere, some in the hemisphere, and some in a ball larger than a hemisphere. Many have self-intersections, but we guarantee that at least some are embedded.

We note that none of the works cited above deal with free boundary minimal surfaces in geodesic balls of $\bb{S}^3$ larger than a hemisphere, which naturally raises the question of whether the theory developed therein still applies. In this direction, we show that the recent results of Lee and Seo \cite{lee_seo_2023} on isoperimetric inequalities and of Lima and Menezes \cite{lima_menezes_2023} on index bounds can be extended to this new setting.

\section{Complete minimal surfaces of revolution in $\bb{S}^3$}\label{sec:complete_minimal_surfaces}

We begin by describing a one-parameter family of minimal immersions of $\bb{R} \times \bb{S}^1$ into $\bb{S}^3$, due to do Carmo and Dajczer \cite{doCarmo_dajczer_1983}. The proof is a simplified version of an argument of Mori \cite{mori_1981}.

\begin{proposition}[\cite{doCarmo_dajczer_1983}] \label{prop:complete_minimal_immersions}
	For $a \in (-\frac{1}{2}, \frac{1}{2})$ and $\phi_0, t, s \in \bb{R}$, let
	\begin{align}
		\psi(a, t) &= \frac{(\frac{1}{4} - a^2)^\frac{1}{2}}{(\frac{1}{2} + a\cos 2t)^\frac{1}{2} (\frac{1}{2} - a\cos 2t)}, \label{eq:psi_s3}\\
		\phi(a, s) &= \phi_0 + \int_0^s \psi(a, t) \, dt,
	\end{align}
	and define the curve $\gamma_a(s) = (x_a(s), y_a(s), z_a(s))$ by
	\begin{align}
		x_a(s) &= \left(\frac{1}{2} - a\cos 2s\right)^\frac{1}{2} \cos\phi(a, s),\\
		y_a(s) &= \left(\frac{1}{2} - a\cos 2s\right)^\frac{1}{2} \sin\phi(a, s), \label{eq:second_coordinate_generating_curve_s3}\\
		z_a(s) &= \left(\frac{1}{2} + a\cos 2s\right)^\frac{1}{2}. \label{eq:third_coordinate_generating_curve_s3}
	\end{align}
	Then $X_a: \bb{R} \times \bb{S}^1 \to \bb{S}^3$ given by
	\begin{equation}
		X_a(s, \theta) = (x_a(s), y_a(s), z_a(s)\cos\theta, z_a(s)\sin\theta)
	\end{equation}
	is a complete immersed minimal surface of revolution in $\bb{S}^3$.
\end{proposition}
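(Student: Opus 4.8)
The plan is to verify the four assertions in turn: that $X_a$ takes values in $\bb{S}^3$, that it is invariant under rotation of the last two coordinates and hence a surface of revolution, that it is an immersion, and finally that it is complete and minimal. The first two claims are immediate. Since $x_a^2 + y_a^2 = \frac12 - a\cos 2s$ and $z_a^2 = \frac12 + a\cos 2s$, the image satisfies $|X_a|^2 = 1$, so $X_a$ lands in $\bb{S}^3$; and the $SO(2)$-action rotating $(X_a^3, X_a^4) = (z_a\cos\theta, z_a\sin\theta)$ preserves the image, with generating curve $\gamma_a$ lying on the totally geodesic $\bb{S}^2 = \bb{S}^3 \cap \{X^4 = 0\}$ (indeed $x_a^2 + y_a^2 + z_a^2 = 1$). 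The axis of revolution is the great circle $\{X^3 = X^4 = 0\}$.

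Next I would compute the first fundamental form. A direct calculation gives $\partial_\theta X_a \perp \partial_s X_a$ and $|\partial_\theta X_a|^2 = z_a^2$, while $|\partial_s X_a|^2 = |\gamma_a'|^2$. Writing $\rho^2 = \frac12 - a\cos 2s$ and using $\phi' = \psi$ together with $\rho^2 + z_a^2 = 1$, the profile speed simplifies via the identities $\rho^{-2} + z_a^{-2} = (\rho^2 z_a^2)^{-1}$ and $\tfrac14 - a^2\cos^2 2s = \rho^2 z_a^2$, yielding $|\gamma_a'| \equiv 1$. Thus $\gamma_a$ is a unit-speed curve on $\bb{S}^2$, the induced metric is $ds^2 + z_a^2\, d\theta^2$, and since $z_a \ge (\tfrac12 - |a|)^{1/2} > 0$ for $a \in (-\tfrac12, \tfrac12)$, both coordinate vector fields are everywhere nonzero, so $X_a$ is an immersion. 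Completeness then follows because $s$ ranges over all of $\bb{R}$ with unit profile speed and $z_a$ is bounded away from $0$, making $(\bb{R}\times\bb{S}^1, ds^2 + z_a^2\,d\theta^2)$ metrically complete.

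The substance lies in minimality, which I would reduce to an ODE for the profile curve. Because the orbit through $\gamma_a(s)$ is a circle of circumference $2\pi z_a(s)$, the area element is $2\pi z_a\, d\ell$, so by symmetric criticality $X_a$ is minimal precisely when $\gamma_a$ is a geodesic of the weighted metric $z^2 g_{\bb{S}^2}$ on the hemisphere $\{z > 0\} \subset \bb{S}^2$. This weighted metric is rotationally symmetric about the $X^3$-axis, hence admits a Clairaut-type first integral, and the key is to check that the prescribed angular speed $\psi$ in \eqref{eq:psi_s3} is exactly the one dictated by this conserved quantity. Concretely, in the coordinates $(\rho, \phi)$ I would write the single second-order equation characterizing geodesics of $z^2 g_{\bb{S}^2}$, substitute $\rho^2 = \frac12 - a\cos 2s$ and $\phi' = \psi$, and verify the identity. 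Equivalently, one may compute the unit normal and the principal curvatures $\kappa_{\mathrm{profile}}$ and $\kappa_{\mathrm{orbit}}$ and confirm $\kappa_{\mathrm{profile}} + \kappa_{\mathrm{orbit}} = 0$ directly.

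I expect the minimality computation to be the main obstacle. The form of $\psi$ is engineered so that the integrand of the profile's conservation law is constant, and confirming this requires carefully exploiting the constraint $\rho^2 + z_a^2 = 1$ to collapse the rotational minimal surface equation down to the stated expression \eqref{eq:psi_s3}; the same factorization $\tfrac14 - a^2\cos^2 2s = \rho^2 z_a^2$ that trivializes the profile speed is what makes this work. Everything else is bookkeeping, but this step is where the specific choice of $\psi$ must be seen to be both necessary and sufficient for $\vec{H} = 0$.
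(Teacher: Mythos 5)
Your overall strategy is correct but genuinely different from the paper's. The paper characterizes minimality via the Takahashi-type condition $\Delta_\Sigma x^i + 2x^i = 0$, translates it into the system \eqref{eq:gamma_in_s2}--\eqref{eq:third_component_gamma_ode_s3} for the profile curve, \emph{solves} the $z$-equation \eqref{eq:third_component_gamma_ode_s3} to obtain $z_a$, derives $\phi$ from the arc-length constraint \eqref{eq:gamma_parametrized_by_arc_length_s3}, and then checks the remaining component equations \eqref{eq:first_component_gamma_ode_s3}--\eqref{eq:second_component_gamma_ode_s3} directly. You instead \emph{verify} the given data: your unit-speed computation via the factorization $\tfrac14 - a^2\cos^2 2s = \rho^2 z_a^2$ (with $\rho = (\tfrac12 - a\cos 2s)^{1/2}$) is correct and is essentially the paper's derivation of $\psi$ run backwards, and you reduce minimality to $\gamma_a$ being a pregeodesic of the weighted metric $z_a^2\, g_{\bb{S}^2}$. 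What your route buys is conceptual transparency: the Clairaut constant $z_a\,\rho^2\,\dot\phi = (\tfrac14 - a^2)^{1/2}$ explains where the formula \eqref{eq:psi_s3} comes from, which the paper's direct check does not. The cost is an appeal to symmetric criticality and a reparametrization argument the paper avoids.

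Two caveats before this can stand as a proof. First, the minimality step is only announced (``I would write \dots and verify the identity''); since that is the entire content of the proposition, it must actually be carried out. Second, as described the Clairaut argument has a logical gap: constancy of the angular-momentum integral is \emph{necessary} but not \emph{sufficient} for a curve in a rotationally symmetric surface to be a geodesic --- non-geodesic latitude circles also have it constant. You need \emph{both} first integrals, constant angular momentum and constant speed in the weighted metric (the latter is exactly your unit-speed computation after reparametrizing by $d\tilde s = z_a\, ds$), and even then the second-order equation in $\rho$ is recovered from the first integrals by differentiation only where $\dot\rho \ne 0$; the discrete set $s \in \tfrac{\pi}{2}\bb{Z}$ where $\dot\rho = 0$ must be handled by continuity. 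Alternatively, executing your second option --- checking $\kappa_{\mathrm{profile}} + \kappa_{\mathrm{orbit}} = 0$ directly --- closes the argument and amounts to the paper's verification of \eqref{eq:first_component_gamma_ode_s3} and \eqref{eq:second_component_gamma_ode_s3}.
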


\begin{proof}
	Let $\gamma(s) = (x(s), y(s), z(s))$ be a smooth curve in $\bb{S}^2 \cap \{x^3 > 0\}$, parametrized by arc length, and rotate it about the $x^1 x^2$-plane to get $X: \bb{R} \times \bb{S}^1 \to \bb{S}^3$ given by $X(s, \theta) = (x(s), y(s), z(s)\cos\theta, z(s)\sin\theta)$. Then $\Sigma$, the image of $X$, is minimal in $\bb{S}^3$ if and only if $\Delta_\Sigma x^i + 2x^i = 0$ for $i=1,\dots,4$, which results in five equations on the generating curve $\gamma$:
	\begin{align}
		x^2 + y^2 + z^2 & = 1, \label{eq:gamma_in_s2}\\
		\dot{x}^2 + \dot{y}^2 + \dot{z}^2 & = 1, \label{eq:gamma_parametrized_by_arc_length_s3}\\
		\ddot{x} + \frac{\dot{z}}{z} \dot{x} + 2x & = 0, \label{eq:first_component_gamma_ode_s3}\\
		\ddot{y} + \frac{\dot{z}}{z} \dot{y} + 2y & = 0, \label{eq:second_component_gamma_ode_s3}\\
		\ddot{z} + \frac{\dot{z}^2 - 1}{z} + 2z & = 0. \label{eq:third_component_gamma_ode_s3}
	\end{align}
	Solving \eqref{eq:third_component_gamma_ode_s3} gives $z(s) = (\frac{1}{2} + a\cos 2s)^\frac{1}{2}$ for $a \in (-\frac{1}{2}, \frac{1}{2})$, and letting
	\begin{align}
		x(s) &= (1 - z^2(s))^\frac{1}{2} \cos \phi(s), \label{eq:first_component_gamma_expression_s3}\\
		y(s) &= (1 - z^2(s))^\frac{1}{2} \sin \phi(s), \label{eq:second_component_gamma_expression_s3}
	\end{align}
	implies
	\begin{equation}
		\phi(s) = \phi_0 + \int_{0}^{s} \frac{(\frac{1}{4} - a^2)^\frac{1}{2}}{(\frac{1}{2} + a\cos 2t)^\frac{1}{2}(\frac{1}{2} - a\cos 2t)} \, dt,
	\end{equation}
	for some $\phi_0 \in \bb{R}$. We can now check directly that the differential equations \eqref{eq:first_component_gamma_ode_s3} and \eqref{eq:second_component_gamma_ode_s3} are satisfied, so that $X$ is indeed a minimal immersion.
\end{proof}

\begin{remark} \label{rem:clifford_torus}
	In the special case that $a = \phi_0 = 0$, we find, directly from the definition, that $\psi(0, t) = \sqrt{2}$ for all $t$, that $\phi(0, s) = \sqrt{2}s$ for all $s$, and therefore the generating curve $\gamma_0(s) = (\frac{1}{\sqrt{2}} \cos (\sqrt{2}s), \frac{1}{\sqrt{2}} \sin (\sqrt{2}s), \frac{1}{\sqrt{2}})$ parametrizes a circle in $\bb{S}^2 \cap \{x^3 > 0\}$. The immersion given by the previous proposition becomes $X_0(s, \theta) = (\frac{1}{\sqrt{2}} \cos (\sqrt{2}s), \frac{1}{\sqrt{2}} \sin (\sqrt{2}s), \frac{1}{\sqrt{2}} \cos\theta, \frac{1}{\sqrt{2}} \sin\theta)$, which we recognize as a parametrization of the Clifford torus $(x^1)^2 + (x^2)^2 = (x^3)^2 + (x^4)^2 = \frac{1}{2}$.
\end{remark}

The image $\Sigma_a(\phi_0)$ of the immersion given in Proposition \ref{prop:complete_minimal_immersions} is in general an immersed cylinder $\bb{R} \times \bb{S}^1$. However, just as in Remark \ref{rem:clifford_torus}, it may happen that the parametrization closes the factor of $\bb{R}$ in its domain and returns an immersed torus $\bb{S}^1 \times \bb{S}^1$. We now characterize the set of all $a \in (-\frac{1}{2}, \frac{1}{2}) \setminus \{0\}$ for which this is the case. Let $C_a: a \in (-\frac{1}{2}, \frac{1}{2}) \mapsto \int_{0}^{\pi} \psi(a, t) \, dt$ be the increase in $\phi(a, s)$ as $s$ goes from $0$ to $\pi$.

\begin{lemma} \label{lem:immersed_torus_rational_pi}
	For $a \in (-\frac{1}{2}, \frac{1}{2}) \setminus \{0\}$, the surface $\Sigma_a(\phi_0)$ is an immersed torus if and only if $C_a$ is a rational multiple of $\pi$.
\end{lemma}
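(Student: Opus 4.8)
The plan is to reduce the question of whether $\Sigma_a(\phi_0)$ closes up into a torus to a periodicity statement about the generating curve $\gamma_a$, and then to translate that periodicity into an arithmetic condition on $C_a$. The starting observation is that the profile data are genuinely $\pi$-periodic in $s$: since $\cos 2s$ has period $\pi$, both $z_a(s) = (\tfrac{1}{2} + a\cos 2s)^{1/2}$ and $r_a(s) := (\tfrac{1}{2} - a\cos 2s)^{1/2}$ are $\pi$-periodic, and so is the speed $\psi(a, \cdot)$, which is moreover strictly positive for $|a| < \tfrac{1}{2}$. Consequently $\phi(a, s + n\pi) = \phi(a, s) + nC_a$ for every integer $n$, because $\int_s^{s+n\pi} \psi(a, t)\, dt = n \int_0^\pi \psi(a, t)\, dt = nC_a$ independently of $s$.

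First I would observe that the image is an immersed torus precisely when the generating curve $\gamma_a$ is periodic. Indeed, a second generator of the period lattice would take the form $X_a(s + S, \theta + c) = X_a(s, \theta)$ for all $s, \theta$; the first two coordinates are independent of $\theta$ and give $x_a(s+S) = x_a(s)$, $y_a(s+S) = y_a(s)$, while squaring and adding the last two coordinates gives $z_a(s+S)^2 = z_a(s)^2$, hence $z_a(s+S) = z_a(s)$ (both are positive), and then $\cos(\theta + c) = \cos\theta$ for all $\theta$ forces $c \in 2\pi\bb{Z}$. Thus the closing is untwisted and reduces to $\gamma_a(s+S) = \gamma_a(s)$ for all $s$.

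Next I would pin down the admissible periods. The condition $z_a(s+S) = z_a(s)$ forces $\cos 2(s+S) = \cos 2s$ for all $s$, hence $S \in \pi\bb{Z}$, so it suffices to test $S = n\pi$. For such $S$ the functions $r_a$ and $z_a$ match automatically, and since $r_a > 0$ the remaining equations $x_a(s + n\pi) = x_a(s)$ and $y_a(s + n\pi) = y_a(s)$ hold for all $s$ exactly when $\cos\phi$ and $\sin\phi$ are unchanged, i.e. when $\phi(a, s + n\pi) - \phi(a, s) = nC_a \in 2\pi\bb{Z}$. Therefore $\gamma_a$ is periodic iff there is a positive integer $n$ with $nC_a \in 2\pi\bb{Z}$, which is precisely the statement that $C_a$ is a rational multiple of $\pi$: writing $C_a = \tfrac{p}{q}\pi$ one checks that $n = 2q$ works for the converse, while $nC_a = 2\pi k$ yields $C_a = \tfrac{2k}{n}\pi \in \pi\bb{Q}$.

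The only point to keep honest is the reduction to testing $S = n\pi$ together with the untwisted-closing claim, since these require the equalities to hold \emph{for all} $s$ rather than at a single value; everything else is bookkeeping with the $\pi$-periodicity of $\psi$. I do not expect a serious obstacle, as the content lies entirely in the clean separation between the periodic profile $(r_a, z_a)$ and the monotone angular drift $\phi$, whose net increment over one profile period is exactly $C_a$.
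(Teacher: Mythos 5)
Your argument is correct and follows essentially the same route as the paper: reduce to periodicity of the generating curve, use the non-constancy of $z_a$ (for $a \neq 0$) to force any period to lie in $\pi\bb{Z}$, and then translate $x_a, y_a$ closing up into $nC_a \in 2\pi\bb{Z}$ for some $n$, i.e. $C_a \in \pi\bb{Q}$. You simply supply more detail than the paper at two points it treats briefly — the justification that the closing of $X_a$ is untwisted and reduces to periodicity of $\gamma_a$, and the explicit choice $n = 2q$ in the converse, which the paper leaves as a direct check.
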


\begin{proof}
	Let $a \in (-\frac{1}{2}, \frac{1}{2}) \setminus \{0\}$. First note that $\psi(a, \cdot)$ is a function of $\cos 2t$ and hence is $\pi$-periodic. Therefore $\phi(a, s + \pi) = \phi(a, s) + C_a$ for all $s \in \bb{R}$, and more generally $\phi(a, s + k\pi) = \phi(a, s) + kC_a$ for all $s \in \bb{R}$ and $k \in \bb{Z}$.

	As a surface of revolution, $\Sigma_a(\phi_0)$ is an immersed torus exactly when the generating curve $\gamma_a$ is periodic. In one direction, if $\gamma_a = (x_a, y_a, z_a)$ is periodic, then all its component functions are periodic and have a common period. But $z_a$ is $\pi$-periodic, and it is not constant since $a \neq 0$, so $x_a$ and $y_a$ must be $k\pi$-periodic for some $k \in \bb{N}$. Then $\cos \phi(a, s + k\pi) = \cos \phi(a, s)$ and $\sin \phi(a, s + k\pi) = \sin \phi(a, s)$ for all $s$, which gives $\phi(a, s + k\pi) - \phi(a, s) \in 2\pi\bb{Z}$, and, from the previous paragraph, $k C_a \in 2\pi\bb{Z}$, as required. The converse can simply be checked directly.
\end{proof}

The case $a = 0$ is excluded from the previous lemma since, as in Remark \ref{rem:clifford_torus}, $\Sigma_0(\phi_0)$ is an immersed torus, but $C_0 = \sqrt{2} \pi \notin \pi\bb{Q}$.

The minimal tori of Lemma \ref{lem:immersed_torus_rational_pi} were introduced by Otsuki \cite{otsuki_1970} through a different approach and have been studied by many authors \cite{doCarmo_dajczer_1983, hsiang_lawson_1971, hu_song_2012, otsuki_1970, otsuki_1993, penskoi_2013}. In the survey \cite{otsuki_1993}, Otsuki explains that these tori arise from periodic solutions $x(t)$ of the nonlinear differential equation
\begin{equation}
	2x(1 - x^2)\ddot{x} + \dot{x}^2 + (1 - x^2)(2x^2 - 1) = 0,
\end{equation}
with fundamental period given by
\begin{equation}
	T(c) = \sqrt{2c} \int_{x_0(c)}^{x_1(c)} \frac{dx}{x\sqrt{(2-x) \left(x(2-x) - c \right)}}
\end{equation}
for $c \in (0, 1)$, where $x_0(c) = 1 - \sqrt{1 - c}$ and $x_1(c) = 1 + \sqrt{1 - c}$ (see \cite[Equation (2.9)]{otsuki_1993}).

We want to use the results of \cite{otsuki_1970, otsuki_1993} about $T(c)$ to deduce properties of $C_a$. To do that, we first establish the connection between the two.

\begin{lemma} \label{lem:connection_c_a_t_c}
	$C_a = T(1 - 4a^2)$ for $a \in (0, \frac{1}{2})$.
\end{lemma}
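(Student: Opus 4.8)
The plan is to reduce both $C_a$ and $T(1-4a^2)$ to one and the same integral over $[x_0,x_1]$ by a single change of variables. Set $c = 1 - 4a^2$, so that $a \in (0,\frac{1}{2})$ corresponds to $c \in (0,1)$; crucially, because $a > 0$, we have $\sqrt{1-c} = \sqrt{4a^2} = 2a$, and hence the endpoints appearing in $T(c)$ are $x_0(c) = 1 - 2a$ and $x_1(c) = 1 + 2a$. Comparing these with the range of the generating curve, where $z_a^2 = \frac{1}{2} + a\cos 2s$ oscillates in $[\frac{1}{2}-a,\frac{1}{2}+a]$, suggests the substitution $x = 2\bigl(1 - z_a^2(t)\bigr) = 1 - 2a\cos 2t$, which carries $t$ onto the interval $[x_0, x_1]$ in exactly the right way.

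First I would exploit the symmetry of the integrand. Since $\psi(a,\cdot)$ depends only on $\cos 2t$, it is $\pi$-periodic and satisfies $\psi(a, \pi - t) = \psi(a, t)$, so that $C_a = \int_0^\pi \psi(a,t)\,dt = 2\int_0^{\pi/2} \psi(a,t)\,dt$. The advantage is that on $[0,\frac{\pi}{2}]$ the map $t \mapsto x = 1 - 2a\cos 2t$ is a smooth increasing bijection onto $[x_0, x_1]$, so the substitution is clean and no branch-tracking of the square roots is needed on this half-interval.

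Next I would carry out the substitution explicitly. Under $x = 1 - 2a\cos 2t$ one computes $\tfrac{1}{2} + a\cos 2t = \tfrac{2-x}{2}$, $\tfrac{1}{2} - a\cos 2t = \tfrac{x}{2}$, and $(\tfrac{1}{4}-a^2)^{1/2} = \tfrac{\sqrt{c}}{2}$, which collapse the definition of $\psi$ to
\begin{equation*}
	\psi(a,t) = \frac{\sqrt{2c}}{x\sqrt{2-x}}.
\end{equation*}
For the measure, $dx = 4a\sin 2t\,dt$, and the key algebraic identity
\begin{equation*}
	4a^2 - (1-x)^2 = x(2-x) - c
\end{equation*}
gives $\sin 2t = \tfrac{1}{2a}\sqrt{x(2-x) - c}$ on $[0,\frac{\pi}{2}]$, so that $dt = \tfrac{dx}{2\sqrt{x(2-x)-c}}$. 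Substituting both expressions and absorbing the factor of $2$ from the symmetry step yields precisely
\begin{equation*}
	C_a = \sqrt{2c}\int_{x_0}^{x_1} \frac{dx}{x\sqrt{(2-x)\bigl(x(2-x)-c\bigr)}} = T(c) = T(1 - 4a^2).
\end{equation*}

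The only genuinely delicate point is selecting the correct substitution: using $x = 2(1 - z_a^2)$ rather than $x = 2z_a^2$ is exactly what produces the weight $\tfrac{1}{x\sqrt{2-x}}$ matching Otsuki's integrand, instead of the transposed weight $\tfrac{1}{\sqrt{x}\,(2-x)}$ one would get otherwise. Beyond that, the hard part is merely bookkeeping — verifying the displayed algebraic identity, keeping the sign of $\sin 2t$ and the orientation of the limits consistent, and invoking $a > 0$ to identify $\sqrt{1-c}$ with $2a$ (which is why the lemma restricts to $a \in (0,\frac{1}{2})$). Everything else is routine computation.
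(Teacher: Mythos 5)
Your proof is correct and follows essentially the same route as the paper: reduce to $2\int_0^{\pi/2}\psi(a,t)\,dt$ by the symmetry $\psi(a,\pi-t)=\psi(a,t)$, then substitute $x = 1 - 2a\cos 2t$, which sends $[0,\frac{\pi}{2}]$ onto $[1-2a,\,1+2a] = [x_0(c),x_1(c)]$ and turns the integrand into Otsuki's. You in fact carry out the final bookkeeping (the collapse of $\psi$ to $\sqrt{2c}/(x\sqrt{2-x})$ and the identification of $dt$) more explicitly than the paper does.
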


\begin{proof}
	The proof is a variable substitution and some computation. From the definition, $\psi(a, \pi - t) = \psi(a, t)$ for all $a$ and $t$, that is, $\psi(a, \cdot)$ is symmetric about $t = \frac{\pi}{2}$. This allows us to write $C_a$ as
	\begin{equation} \label{eq:c_a_integral}
		C_a = 2 \int_{0}^{\frac{\pi}{2}} \frac{(\frac{1}{4} - a^2)^\frac{1}{2}}{(\frac{1}{2} + a\cos 2t)^\frac{1}{2} (\frac{1}{2} - a\cos 2t)} \, dt.
	\end{equation}
	Now let $x = 1 - 2a \cos 2t$. Then $\frac{1}{2} - a\cos 2t = \frac{x}{2}$, $\frac{1}{2} + a\cos 2t = 1 - \frac{x}{2}$, and
	\begin{equation}
		\begin{aligned}
			\frac{dx}{dt} &= -2a (-2\sin 2t) \\
			&= 2 (4a^2 - (1 - x)^2)^\frac{1}{2} \\
			&= 2 (x(2 - x) - (1 - 4a^2))^\frac{1}{2}.
		\end{aligned}
	\end{equation}
	For the bounds, when $t = 0$, $x = 1 - 2a$, and when $t = \frac{\pi}{2}$, $x = 1 + 2a$. Substituting all this back into \eqref{eq:c_a_integral} yields the result.
\end{proof}

\begin{lemma} \label{lem:properties_of_c_a}
	The following are true of $C_a: (-\frac{1}{2}, \frac{1}{2}) \to \bb{R}$:
	\begin{enumerate}[i.]
		\item $C_a$ is even as a function of $a$; \label{eq:c_a_even}
		\item $C_0 = \sqrt{2}\pi$; \label{eq:c_zero}
		\item $C_a$ is strictly decreasing in $[0, \frac{1}{2})$; \label{eq:c_a_strictly_decreasing}
		\item $C_a > \pi$ for all $a \in (-\frac{1}{2}, \frac{1}{2})$; \label{eq:c_a_lower_bound}
		\item $\lim_{a \to \frac{1}{2}^-} C_a = \pi$. \label{eq:c_a_right_limit}
	\end{enumerate}
\end{lemma}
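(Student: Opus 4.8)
The plan is to dispatch the two algebraic statements (\ref{eq:c_a_even}) and (\ref{eq:c_zero}) by direct inspection of the integral defining $C_a$, and then to obtain the three analytic statements (\ref{eq:c_a_strictly_decreasing})--(\ref{eq:c_a_right_limit}) by transporting known facts about the Otsuki period function $T$ through the identity $C_a = T(1-4a^2)$ of Lemma \ref{lem:connection_c_a_t_c}. For evenness, the point is that $\psi(a,\cdot)$ is $\pi$-periodic and satisfies $\psi(-a, t+\tfrac\pi2) = \psi(a,t)$, since $\cos 2(t+\tfrac\pi2) = -\cos 2t$ interchanges the two factors $\tfrac12 \pm a\cos 2t$; integrating this identity over a full period gives $C_{-a} = C_a$. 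For (\ref{eq:c_zero}) I would simply set $a=0$ in \eqref{eq:psi_s3}, obtaining $\psi(0,t)\equiv\sqrt2$ (as already noted in Remark \ref{rem:clifford_torus}), so that $C_0 = \sqrt2\,\pi$.

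For the monotonicity (\ref{eq:c_a_strictly_decreasing}), observe that $c = 1-4a^2$ is strictly decreasing on $[0,\tfrac12)$, so by Lemma \ref{lem:connection_c_a_t_c} it suffices to know that $T$ is strictly increasing on $(0,1)$, which is the monotonicity of the Otsuki period function recorded in \cite{otsuki_1970,otsuki_1993}. For the right-hand limit (\ref{eq:c_a_right_limit}) I would similarly invoke $\lim_{c\to 0^+} T(c) = \pi$ from the same references, noting that $a\to\tfrac12^-$ corresponds to $c\to 0^+$.

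Alternatively — and this is the route I would take to keep the argument self-contained — both facts can be read off after rewriting
\begin{equation}
	C_a = \sqrt2\,(1-4a^2)^{\frac12}\int_0^\pi \frac{d\omega}{(1 - 2a\cos\omega)\,(1 + 2a\cos\omega)^{\frac12}},
\end{equation}
which follows from \eqref{eq:psi_s3} by the substitution $\omega = 2t$ together with the symmetry of the integrand about $\tfrac\pi2$. Here the endpoints are fixed and the integrand is smooth in $a$ for $a\in[0,\tfrac12)$, so differentiation under the integral sign is legitimate; and as $a\to\tfrac12^-$ the prefactor vanishes like $(1-2a)^{\frac12}$ while the integral diverges like $(1-2a)^{-\frac12}$, owing to the near-singularity of $(1-2a\cos\omega)^{-1}$ at $\omega=0$, the two rates combining to give exactly $\pi$.

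Finally, the lower bound (\ref{eq:c_a_lower_bound}) requires no new work: by evenness it suffices to treat $[0,\tfrac12)$, and there $C_a$ is strictly decreasing with infimum $\pi$ approached only in the limit, so $C_a > \pi$ throughout. I expect the one genuine obstacle to be the strict monotonicity (\ref{eq:c_a_strictly_decreasing}): if one declines to cite \cite{otsuki_1970,otsuki_1993} for it, then controlling the sign of $\frac{d}{da}C_a$ directly from the displayed integral is delicate, since the $a$-derivative of the integrand is not of one sign on $[0,\pi]$ and must be weighed against the derivative of the vanishing prefactor.
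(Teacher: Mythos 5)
Your proposal is correct and follows essentially the same route as the paper: evenness via the glide symmetry $\psi(-a,t)=\psi(a,t+\tfrac{\pi}{2})$ together with $\pi$-periodicity, $C_0=\sqrt{2}\pi$ by inspection, and monotonicity and the limit at $\tfrac12^-$ imported from the Otsuki period function through Lemma \ref{lem:connection_c_a_t_c}. The one divergence is point \eqref{eq:c_a_lower_bound}: you deduce $C_a>\pi$ from \eqref{eq:c_a_even}, \eqref{eq:c_a_strictly_decreasing} and \eqref{eq:c_a_right_limit}, which is logically valid (and the paper explicitly acknowledges this works), but the paper instead gives a short self-contained estimate --- bounding $\psi(a,t)\ge(\tfrac12-a)^{1/2}/(\tfrac12-a\cos 2t)$ and integrating to get $C_a\ge\pi(\tfrac12+a)^{-1/2}>\pi$ --- precisely so that the lower bound does not inherit a dependence on the cited facts about $T(c)$; your version makes \eqref{eq:c_a_lower_bound} rest on those citations. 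Your sketched self-contained treatment of the limit via the rewritten integral is plausible (the $(1-2a)^{\pm 1/2}$ rates do cancel to give $\pi$), but as you note it is only a sketch, and your candid assessment that the strict monotonicity is the genuinely delicate point if one refuses to cite \cite{otsuki_1970,otsuki_1993} matches the paper's choice to simply cite it.
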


\begin{proof}
	As in the beginning of the proof of Lemma \ref{lem:immersed_torus_rational_pi}, for each $a \in (-\frac{1}{2}, \frac{1}{2}) \setminus \{0\}$, $\psi(a, \cdot)$ is $\pi$-periodic. This, together with the glide symmetry $\psi(-a, t) = \psi(a, t + \frac{\pi}{2})$ for $a \in (-\frac{1}{2}, \frac{1}{2})$ and $t \in \bb{R}$, gives \eqref{eq:c_a_even}. Point \eqref{eq:c_zero} is immediate from the definition. 

	While \eqref{eq:c_a_lower_bound} follows from the other points combined, we include here a direct proof that does not rely on \cite{otsuki_1970, otsuki_1993}. The lower bound on $\psi$,
	\begin{equation}
		\psi(a, t) \ge \frac{(\frac{1}{2} - a)^\frac{1}{2}}{\frac{1}{2} - a\cos 2t},
	\end{equation}
	gives a lower bound on $C_a$,
	\begin{equation}
		\begin{aligned}
			C_a &\ge \int_{0}^{\pi} \frac{(\frac{1}{2} - a)^\frac{1}{2}}{\frac{1}{2} - a\cos 2t} \, dt\\
			&= 2 \int_{0}^{\frac{\pi}{2}} \frac{(\frac{1}{2} - a)^\frac{1}{2}}{\frac{1}{2} - a\cos 2t} \, dt\\
			&= 2 \lim_{s \to \frac{\pi}{2}^-} \frac{1}{(\frac{1}{2} + a)^\frac{1}{2}} \arctan \left(\frac{(\frac{1}{2} + a)^\frac{1}{2}}{(\frac{1}{2} - a)^\frac{1}{2}} \tan s \right)\\
			&= \frac{\pi}{(\frac{1}{2} + a)^\frac{1}{2}}.
		\end{aligned}
	\end{equation}
	In particular, $C_a > \pi$ for all $a \in (-\frac{1}{2}, \frac{1}{2})$, establishing \eqref{eq:c_a_lower_bound}.

	To show \eqref{eq:c_a_strictly_decreasing} and \eqref{eq:c_a_right_limit} we make use of Lemma \ref{lem:connection_c_a_t_c}. From \cite[Section 5]{otsuki_1993}, $T(c)$ is strictly increasing for $c \in (0, 1)$, so Lemma \ref{lem:connection_c_a_t_c} implies that $\frac{d}{da} C_a$ is negative for $a \in (0, \frac{1}{2})$, giving \eqref{eq:c_a_strictly_decreasing}. Finally, \eqref{eq:c_a_right_limit} follows from $\lim_{c \to 0^+} T(c) = \pi$ and Lemma \ref{lem:connection_c_a_t_c} (see \cite[Appendix]{otsuki_1970}).
\end{proof}

Points \eqref{eq:c_a_even}, \eqref{eq:c_zero} and \eqref{eq:c_a_strictly_decreasing} of the previous lemma give us the upper bound $C_a \le \sqrt{2}\pi$ for $a \in (-\frac{1}{2}, \frac{1}{2})$. Point \eqref{eq:c_a_lower_bound} gives a lower bound, \eqref{eq:c_a_strictly_decreasing} gives injectivity of $C_a$ in $[0, \frac{1}{2})$, and \eqref{eq:c_zero}, \eqref{eq:c_a_strictly_decreasing} and \eqref{eq:c_a_right_limit} imply surjectivity of $C_a$ onto $(\pi, \sqrt{2}\pi]$. Together, Lemma \ref{lem:properties_of_c_a} shows that, given $p,q \in \bb{N}$, $p,q$ coprime, with $\frac{1}{2} < \frac{p}{q} < \frac{1}{\sqrt{2}}$, there is a unique $a \in (0, \frac{1}{2})$ such that $C_a = \frac{2p\pi}{q}$. The resulting surface $\Sigma_a(\phi_0)$ is then the Otsuki torus corresponding to the rational number $\frac{p}{q}$, with initial angle $\phi_0$.

\section{Free boundary minimal annuli of revolution with boundary in a geodesic sphere of $\bb{S}^3$} \label{sec:fbma_general_a}

Now we look for free boundary minimal surfaces inside the one-parameter family of complete minimal surfaces described in the previous section. It turns out that many of the compact free boundary minimal surfaces we find will have their boundary in a geodesic sphere of $\bb{S}^3$, but will not be contained in a geodesic ball of $\bb{S}^3$. This is of course impossible in Euclidean space as can be seen from the convex hull property. It is therefore useful to consider all free boundary minimal surfaces with boundary in a geodesic sphere for the purposes of existence, and only then return to the question of whether they are contained in a ball.

Let $p_N = (1, 0, 0, 0) \in \bb{S}^3$ be the north pole and $B_r(p_N)$ be the closed geodesic ball in $\bb{S}^3$ centered at $p_N$ with radius $r$. The next lemma introduces a function $f_a$ whose zeros characterize orthogonal intersections of $X_a$ with some geodesic sphere $\partial B_r(p_N)$ and gives a sufficient condition for the existence of an immersed free boundary minimal annulus with boundary in such a geodesic sphere.

\begin{lemma} \label{lem:characterization_orthogonal_intersections}
	For $a \in (-\frac{1}{2}, \frac{1}{2})$, let $f_a: \bb{R} \to \bb{R}$ be defined by
	\begin{equation} \label{eq:definition_f_a_s3}
		f_a(s) = a\sin(2s) \sin\phi(a, s) + \left(\frac{1}{4} - a^2 \right)^\frac{1}{2} \left(\frac{1}{2} + a\cos 2s \right)^\frac{1}{2} \cos\phi(a, s).
	\end{equation}
	If $s_1 < s_2 \in \bb{R}$ are such that $f_a(s_1) = f_a(s_2) = 0$ and $x_a(s_1) = x_a(s_2)$, then $X_a: [s_1, s_2] \times \bb{S}^1 \to \bb{S}^3$ as given in Proposition \ref{prop:complete_minimal_immersions} is an immersed free boundary minimal annulus with boundary in the geodesic sphere $\bb{S}^3 \cap \{x^1 = x_a(s_1)\}$.
\end{lemma}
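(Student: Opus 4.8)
The plan is to inherit minimality from Proposition~\ref{prop:complete_minimal_immersions} and to reduce the free boundary condition to the single scalar equation $f_a=0$ via a conormal computation. First I would record the structural facts. Restricting the minimal immersion $X_a$ to the compact cylinder $[s_1,s_2]\times\bb{S}^1$ keeps it minimal and immersed, and $[s_1,s_2]\times\bb{S}^1$ is topologically an annulus. Since the geodesic distance from $p_N=(1,0,0,0)$ to $p\in\bb{S}^3$ is $\arccos x^1$, the set $\bb{S}^3\cap\{x^1=x_a(s_1)\}$ is exactly the geodesic sphere $\partial B_r(p_N)$ with $r=\arccos x_a(s_1)$; note $|x_a|<1$ because $\tfrac12-a\cos 2s\le\tfrac12+|a|<1$ when $|a|<\tfrac12$, so this is a genuine sphere. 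The two boundary circles $\{s_i\}\times\bb{S}^1$ are sent to points with first coordinate $x_a(s_i)$, so the hypothesis $x_a(s_1)=x_a(s_2)$ is precisely what places the entire boundary in this one geodesic sphere.

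It remains to verify the orthogonality (free boundary) condition along each boundary circle, and here I would exploit the surface of revolution structure. A direct computation gives $\langle X_s,X_\theta\rangle=0$ and $|X_s|^2=\dot x_a^2+\dot y_a^2+\dot z_a^2=1$, the latter because $\gamma_a$ is parametrized by arc length. Hence along $\{s=s_i\}\times\bb{S}^1$, whose tangent is spanned by $X_\theta$, the conormal of the annulus is $\eta=\pm X_s$. The surface meets $\partial B_r(p_N)$ orthogonally along this circle exactly when $\eta$ is parallel to the unit normal of the sphere in $\bb{S}^3$, namely $N\propto e_1-x^1p=e_1-x_a(s_i)\,X_a(s_i,\theta)$.

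The heart of the argument is to show that $\eta\parallel N$ holds along the whole circle if and only if $f_a(s_i)=0$. Since $\eta\in T_p\bb{S}^3$ satisfies $\eta\perp p$, the relation $\eta\parallel N$ is equivalent to $\eta\in\operatorname{span}(e_1,p)$. Writing $\eta=\alpha e_1+\beta p$ and matching the two rotational components (which carry the $\cos\theta$ and $\sin\theta$ factors) forces $\beta=\dot z_a/z_a$ and leaves, from the second component, the single condition $z_a\dot y_a-y_a\dot z_a=0$; the first component merely determines $\alpha$, and the degenerate case $x_a(s_i)=0$ is checked to give the same condition. Using \eqref{eq:second_coordinate_generating_curve_s3}, \eqref{eq:third_coordinate_generating_curve_s3} together with $\dot z_a=-a\sin(2s)/z_a$ and $\dot\phi=\psi(a,s)$ from \eqref{eq:psi_s3}, I would then simplify to
\begin{equation*}
	z_a\dot y_a-y_a\dot z_a=\frac{f_a(s)}{\left(\tfrac12-a\cos 2s\right)^{\frac12} z_a},
\end{equation*}
with $f_a$ as in \eqref{eq:definition_f_a_s3}. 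Both factors in the denominator are positive, so orthogonality at $s_i$ is equivalent to $f_a(s_i)=0$; the hypotheses $f_a(s_1)=f_a(s_2)=0$ then supply the free boundary condition on both circles.

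I expect the main obstacle to be exactly this last identity: differentiating $x_a,y_a,z_a$ and collapsing the combination $z_a\dot y_a-y_a\dot z_a$ into the compact form above, while verifying that the conormal condition holds uniformly in $\theta$ (in particular at boundary points where $x_a(s_i)=0$). The remaining steps are bookkeeping.
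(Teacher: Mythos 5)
Your proposal is correct and follows essentially the same route as the paper: both reduce the free boundary condition along each circle $\{s=s_i\}\times\bb{S}^1$ to the vanishing of $z_a\dot{y}_a-y_a\dot{z}_a$, which is shown to equal $f_a$ up to a nonvanishing factor. The only cosmetic difference is that you test whether the conormal $X_s$ lies in $\operatorname{span}(e_1, p)$ (i.e.\ is parallel to the sphere's normal $e_1-x^1p$), while the paper equivalently computes $\langle N_a,\nu_a\rangle$ for the surface normal $N_a$ and the sphere's conormal field $\nu_a$; your key identity agrees with the paper's up to sign.
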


\begin{proof}
	The vector field $\nu$ on $\bb{S}^3 \setminus \{\pm p_N\}$ given by
	\begin{equation}
		\nu(p) = \frac{1}{(1-(x^1)^2)^\frac{1}{2}} \left( ((x^1)^2 - 1) \partial_1 + x^1 x^2 \partial_2 + x^1 x^3 \partial_3 + x^1 x^4 \partial_4 \right)
	\end{equation}
	restricts to the outward unit conormal to $B_r(p_N)$ along its boundary whenever $p \in \partial B_r(p_N)$. Hence, letting $\nu_a = \nu \circ X_a$, we have
	\begin{equation}
		\nu_a(s, \theta) = \frac{1}{(1 - x_a^2)^\frac{1}{2}} \left( (x_a^2 - 1)\partial_1 + x_a y_a \partial_2 + x_a z_a \cos(\theta) \partial_3 + x_a z_a \sin(\theta) \partial_4 \right).
	\end{equation}
	To compute the unit normal $N_a$ to $\Sigma_a(\phi_0)$, start with the unit vector field $n_a = \gamma_a \times \dot{\gamma}_a$ along the generating curve $\gamma_a$ that is normal to the curve but tangent to $\bb{S}^2$, and rotate it around the $x^1 x^2$-plane to get
	\begin{multline}
		N_a(s, \theta) = (y_a\dot{z}_a - z_a\dot{y}_a) \partial_1 - (x_a\dot{z}_a - z_a\dot{x}_a) \partial_2\\
		+ (x_a\dot{y}_a - y_a\dot{x}_a) \cos(\theta) \partial_3 + (x_a\dot{y}_a - y_a\dot{x}_a) \sin(\theta) \partial_4.
	\end{multline}
	It follows that
	\begin{equation}
		\langle N_a, \nu_a \rangle (s, \theta) = - \frac{y_a\dot{z}_a - z_a\dot{y}_a}{(1-x_a^2)^\frac{1}{2}},
	\end{equation}
	and, making use of \eqref{eq:second_coordinate_generating_curve_s3} and \eqref{eq:third_coordinate_generating_curve_s3}, a direct computation gives
	\begin{equation}
		y_a\dot{z}_a - z_a\dot{y}_a = -\frac{a\sin(2s) \sin \phi(a, s) + (\frac{1}{4} - a^2)^\frac{1}{2} (\frac{1}{2} + a\cos 2s)^\frac{1}{2} \cos\phi(a, s)}{(\frac{1}{2} + a\cos 2s)^\frac{1}{2} (\frac{1}{2} - a\cos 2s)^\frac{1}{2}}.
	\end{equation}
	This shows that the zeros of $f_a$ characterize orthogonal intersections of $\Sigma_a(\phi_0)$ with geodesic spheres in $\bb{S}^3$ centered at $p_N$. The hypothesis $x_a(s_1) = x_a(s_2)$ guarantees that the two boundary components $X_a(\{s_1\}\times\bb{S}^1)$ and $X_a(\{s_2\}\times\bb{S}^1)$ lie on the same geodesic sphere.
\end{proof}

Note that the previous lemma says nothing about the topology of the image of the resulting immersion. For example, when $a = 0$, then $s_1 = -\frac{\pi}{2\sqrt{2}}$ and $s_2 = \frac{3\pi}{2\sqrt{2}}$ satisfy the hypotheses of the lemma, but the image of the resulting immersion is a topological torus. In fact there are many examples like this whenever the generating curve $\gamma_a$ is periodic.

We now show that, for each $a \in (-\frac{1}{2}, \frac{1}{2})$, the complete surface $\Sigma_a(0)$ contains an infinite sequence of immersed free boundary minimal annuli $X_a: I_i \times \bb{S}^1 \to \bb{S}^3$, $i \in \bb{N}$, with $I_i \subset \bb{R}$ a compact interval, and moreover that this sequence is nested, that is, $X_a(I_i \times \bb{S}^1) \subset X_a(I_{i+1} \times \bb{S}^1)$ for $i \in \bb{N}$. See Figure \ref{fig:fbma_general_a} for the first four such immersed free boundary minimal annuli in $\Sigma_a(0)$ with $a = 0.29$.

\begin{theorem} \label{thm:infinitely_many_fbma}
	For each $a \in (-\frac{1}{2}, \frac{1}{2})$, the surface $\Sigma_a(0)$ contains a countably infinite nested collection of immersed free boundary minimal annuli, each with boundary in a geodesic sphere centered at $p_N$.
\end{theorem}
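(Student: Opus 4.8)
The plan is to feed suitable pairs $(s_1, s_2)$ into Lemma~\ref{lem:characterization_orthogonal_intersections}, so it suffices to produce, for each $a$, an unbounded increasing sequence of pairs $s_1 < s_2$ satisfying $f_a(s_1) = f_a(s_2) = 0$ and $x_a(s_1) = x_a(s_2)$. The key point is that the choice $\phi_0 = 0$ forces a reflection symmetry in $s$. Indeed, $\psi(a, \cdot)$ is a function of $\cos 2t$ and hence even in $t$, so $\phi(a, s) = \int_0^s \psi(a, t)\,dt$ is odd in $s$. Consequently $\cos 2s$ and $\cos\phi(a, s)$ are even in $s$ while $\sin 2s$ and $\sin\phi(a, s)$ are odd, and inspecting the definition of $x_a$ in Proposition~\ref{prop:complete_minimal_immersions} and of $f_a$ in \eqref{eq:definition_f_a_s3} shows that both are even functions of $s$. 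Therefore, for \emph{any} $c > 0$ with $f_a(c) = 0$, the symmetric pair $(s_1, s_2) = (-c, c)$ satisfies $f_a(-c) = f_a(c) = 0$ and $x_a(-c) = x_a(c)$ automatically, and the whole theorem collapses to a one-sided count of zeros of $f_a$.

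To carry out that count I would rewrite $f_a(s) = A(s)\cos\phi(a, s) + B(s)\sin\phi(a, s)$, where
\begin{equation*}
	A(s) = \left(\tfrac14 - a^2\right)^{\frac12}\left(\tfrac12 + a\cos 2s\right)^{\frac12}, \qquad B(s) = a\sin 2s.
\end{equation*}
Since $|a| < \frac12$ forces $\frac12 + a\cos 2s > 0$, the amplitude $R(s) = (A(s)^2 + B(s)^2)^{1/2}$ is strictly positive and $A(s) > 0$, so there is a continuous, bounded phase $\delta(s) = \arctan\bigl(B(s)/A(s)\bigr) \in (-\tfrac\pi2, \tfrac\pi2)$ for which
\begin{equation*}
	f_a(s) = R(s)\cos\bigl(\phi(a, s) - \delta(s)\bigr).
\end{equation*}
As $R$ never vanishes, the zeros of $f_a$ are precisely the solutions of $\phi(a, s) - \delta(s) \in \frac\pi2 + \pi\bb{Z}$.

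It remains to observe that this phase runs off to infinity. Since $\psi(a, \cdot) > 0$, the function $\phi(a, \cdot)$ is strictly increasing, and since it is $\pi$-periodic with $\int_0^\pi \psi(a, t)\,dt = C_a > \pi$ we get $\phi(a, k\pi) = kC_a \to +\infty$; as $\delta$ is bounded, $\phi(a, s) - \delta(s) \to +\infty$ as $s \to +\infty$. By the intermediate value theorem this quantity attains every level $\frac\pi2 + k\pi$ for all large $k$, producing an unbounded set of positive zeros of $f_a$, from which I extract a strictly increasing sequence $0 < c_1 < c_2 < \cdots$. With $I_i = [-c_i, c_i]$, Lemma~\ref{lem:characterization_orthogonal_intersections} then gives for each $i$ an immersed free boundary minimal annulus $X_a \colon I_i \times \bb{S}^1 \to \bb{S}^3$ with boundary in the geodesic sphere $\bb{S}^3 \cap \{x^1 = x_a(c_i)\}$ centered at $p_N$, and $I_i \subset I_{i+1}$ delivers the nesting $X_a(I_i \times \bb{S}^1) \subset X_a(I_{i+1} \times \bb{S}^1)$.

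The one genuinely substantive step is the symmetry reduction in the first paragraph: recognizing that $\phi_0 = 0$ makes $x_a$ and $f_a$ even is exactly what supplies the second zero and the matching $x_a$-value simultaneously, turning a two-variable matching problem into a one-dimensional zero count. After that, the phase-amplitude rewriting and the divergence $\phi(a, s) \to +\infty$ make the existence of the infinite nested family essentially automatic; the only minor care needed is to confirm $A(s) > 0$ so that the phase $\delta$ is well defined and $R$ stays away from zero.
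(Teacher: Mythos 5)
Your proof is correct, and while it opens with exactly the same symmetry reduction as the paper (with $\phi_0 = 0$, $\psi(a,\cdot)$ even makes $\phi(a,\cdot)$ odd, hence $f_a$ and $x_a$ even, so each positive zero $c$ of $f_a$ feeds the pair $(-c,c)$ into Lemma \ref{lem:characterization_orthogonal_intersections}), the central analytic step --- producing infinitely many positive zeros of $f_a$ --- is done by a genuinely different route. The paper samples $f_a$ at the points $s = \frac{k\pi}{2}$, where $\sin 2s$ vanishes and the formula collapses to \eqref{eq:f_half_integer_multiples_pi}; it then invokes the bound $\frac{C_a}{2} \in (\frac{\pi}{2}, \frac{\pi}{\sqrt{2}}]$ from Lemma \ref{lem:properties_of_c_a} together with an orbit argument (periodic or dense in $\bb{S}^1$) to show $\cos(\frac{kC_a}{2})$ changes sign infinitely often, and concludes by the intermediate value theorem. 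You instead write $f_a = R\cos(\phi - \delta)$ with $R > 0$ and $\delta$ bounded, which characterizes \emph{all} zeros of $f_a$ as the solutions of $\phi(a,s) - \delta(s) \in \frac{\pi}{2} + \pi\bb{Z}$, and then uses only $\psi > 0$ (so $\phi \to +\infty$) to get infinitely many of them. Your argument is more self-contained --- it needs neither Lemma \ref{lem:properties_of_c_a} nor the equidistribution remark, and in fact $C_a > 0$ already suffices where you quote $C_a > \pi$ --- and it gives a cleaner global picture of the zero set; the paper's sampling computation, on the other hand, is reused almost verbatim in Theorem \ref{thm:counting_fbma_in_otsuki_tori} to obtain exact counts of zeros in Otsuki tori, which your phase argument does not localize as precisely. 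The only point you leave implicit is why the zero set you produce is unbounded, but this is immediate since $\phi - \delta$ is bounded on bounded intervals, so the preimages of the distinct levels $\frac{\pi}{2} + k\pi$ must escape to infinity.
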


\begin{proof}
	Let $a \in (-\frac{1}{2}, \frac{1}{2})$ and note that $\psi(a, \cdot)$ is even. Letting $\phi_0 = 0$ guarantees that $\phi(a, \cdot)$ is odd and hence that $f_a$ and $x_a$ are even. This reduces the problem to finding positive zeros of $f_a$. For suppose $s_* > 0$ is such that $f_a(s_*) = 0$. Then the above implies that $f_a(-s_*) = 0$ and $x_a(-s_*) = x_a(s_*)$, so by Lemma \ref{lem:characterization_orthogonal_intersections}, $X_a: [-s_*, s_*] \times \bb{S}^1 \to \bb{S}^3$ is an immersed free boundary minimal annulus with boundary in the geodesic sphere $\bb{S}^3 \cap \{x^1 = x_a(s_*)\}$.
	
	We now prove the existence of countably many positive zeros of $f_a$. As shown in the proof of Lemma \ref{lem:immersed_torus_rational_pi}, $\phi(a, s + k\pi) = \phi(a, s) + kC_a$ for all $s \in \bb{R}$ and $k \in \bb{Z}$. Using $\phi_0 = 0$, we find that $\phi(a, k\pi) = kC_a$ for all $k \in \bb{Z}$. As in the beginning of the proof of Lemma \ref{lem:connection_c_a_t_c}, $\psi(a, \cdot)$ being symmetric about $t = \frac{\pi}{2}$ gives $C_a = 2 \int_{0}^{\frac{\pi}{2}} \psi(a, t) \, dt$, and then $\phi_0 = 0$ means that $\phi(a, \frac{\pi}{2}) = \frac{C_a}{2}$. By induction, $\phi(a, \frac{k\pi}{2}) = \frac{kC_a}{2}$ for all $k \in \bb{Z}$, and hence
	\begin{equation} \label{eq:f_half_integer_multiples_pi}
		f_a \left(\frac{k\pi}{2} \right) = \left(\frac{1}{4} - a^2 \right)^\frac{1}{2} \left(\frac{1}{2} + a (-1)^k \right)^\frac{1}{2} \cos \left(\frac{kC_a}{2} \right).
	\end{equation}
	Note that, for any $\beta \in (0, 2\pi)$, $\cos(k\beta)$ is positive for infinitely many $k \in \bb{N}$ and negative for infinitely many $k \in \bb{N}$. This holds since, if $\beta \in (0, 2\pi) \cap \pi\bb{Q}$, then its orbit $\{k\beta\}_{k\in\bb{N}}$ is periodic in $\bb{S}^1$, and if $\beta \in (0, 2\pi) \setminus \pi\bb{Q}$, then its orbit is dense in $\bb{S}^1$. But from Lemma \ref{lem:properties_of_c_a}, $\frac{C_a}{2} \in (\frac{\pi}{2}, \frac{\pi}{\sqrt{2}}]$, so from \eqref{eq:f_half_integer_multiples_pi}, $f_a(\frac{k\pi}{2})$ is positive for infinitely many $k \in \bb{N}$ and negative for infinitely many $k \in \bb{N}$. This shows $f_a$ has countably many positive zeros $s_1 < s_2 < \dots < s_i < \dots$, $i \in \bb{N}$, and then $X_a: [-s_i, s_i] \times \bb{S}^1 \to \bb{S}^3$ is an immersed free boundary minimal annulus with boundary in the geodesic sphere $\bb{S}^3 \cap \{x^1 = x_a(s_i)\}$. The collection is nested because $[-s_i, s_i] \subset [-s_{i+1}, s_{i+1}]$ for every $i \in \bb{N}$.
\end{proof}

\begin{figure}
	\centering
	\includegraphics[width=\textwidth]{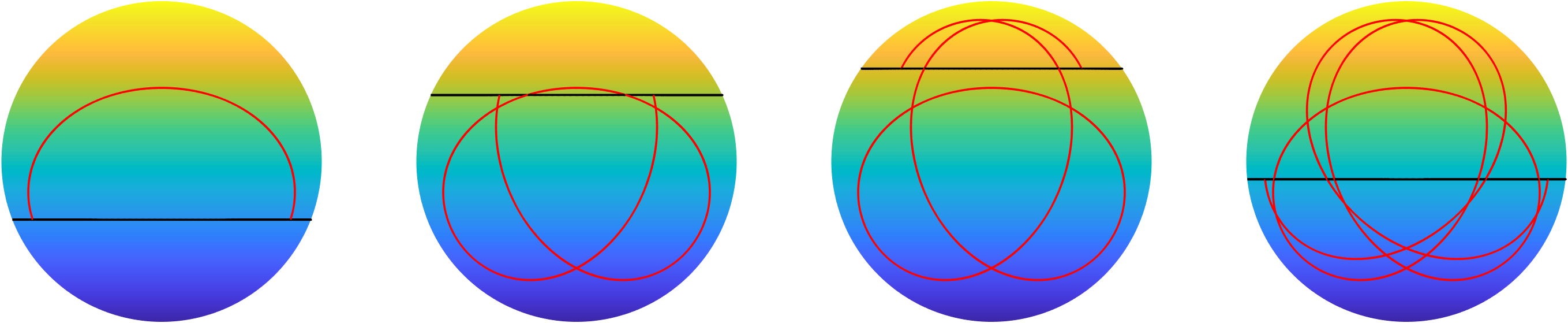}
	\caption{The sphere $\bb{S}^2$, the generating curve $\gamma_a$ (red) for the immersed free boundary minimal annuli $X_a: [-s_i, s_i] \times \bb{S}^1 \to \bb{S}^3$ ($i = 1, \dots, 4$) and the geodesic spheres $\partial B_{r}^2(p_N)$ (black) that $\gamma_a$ intersects orthogonally, where $a = 0.29$}
	\label{fig:fbma_general_a}
\end{figure}

While many of the free boundary minimal annuli we describe have self-intersections, there is at least a one-parameter family of such surfaces that are also embedded.

\begin{proposition} \label{prop:one_parameter_family_embedded}
	Let $\phi_0 = 0$. For each $a \in (-\frac{1}{2}, \frac{1}{2})$, let $s_1(a)$ be the first positive zero of $f_a$. Then the free boundary minimal annulus $X_a: [-s_1(a), s_1(a)] \times \bb{S}^1 \to \bb{S}^3$ with boundary in the geodesic sphere $\bb{S}^3 \cap \{x^1 = x_a(s_1(a))\}$ is embedded.
\end{proposition}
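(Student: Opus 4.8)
The plan is to reduce the embeddedness of $X_a$ to the injectivity of the generating curve $\gamma_a$, and then to a single estimate on the longitude, namely $\phi(a, s_1(a)) < \pi$.

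Since the domain $[-s_1(a), s_1(a)] \times \bb{S}^1$ is compact and $X_a$ is an immersion, it is an embedding as soon as it is injective, so I would only check injectivity. Suppose $X_a(s, \theta) = X_a(s', \theta')$. The last two coordinates give $z_a(s)\cos\theta = z_a(s')\cos\theta'$ and $z_a(s)\sin\theta = z_a(s')\sin\theta'$; since $z_a = (\frac{1}{2} + a\cos 2s)^\frac{1}{2} > 0$ for $|a| < \frac{1}{2}$, summing squares gives $z_a(s) = z_a(s')$ and then $\theta = \theta'$ in $\bb{S}^1$. The first two coordinates give $x_a(s) = x_a(s')$ and $y_a(s) = y_a(s')$, so the whole problem reduces to showing that $\gamma_a$ is injective on $[-s_1(a), s_1(a)]$.

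To that end, write $\rho(s) = (\frac{1}{2} - a\cos 2s)^\frac{1}{2} > 0$, so that $x_a = \rho\cos\phi$ and $y_a = \rho\sin\phi$. From $x_a(s) = x_a(s')$ and $y_a(s) = y_a(s')$ I get $\rho(s)^2 = x_a(s)^2 + y_a(s)^2 = \rho(s')^2$, hence $\rho(s) = \rho(s')$; dividing the two equalities by this common positive factor yields $\phi(a, s) \equiv \phi(a, s') \pmod{2\pi}$. Since $\psi(a, \cdot) > 0$, the longitude $\phi(a, \cdot)$ is strictly increasing, and with $\phi_0 = 0$ it is odd, so on $[-s_1(a), s_1(a)]$ it takes values in an interval of length $2\phi(a, s_1(a))$. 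If this length is strictly less than $2\pi$, i.e.\ if $\phi(a, s_1(a)) < \pi$, then the congruence forces $\phi(a, s) = \phi(a, s')$, and strict monotonicity gives $s = s'$. Thus the whole statement reduces to the bound $\phi(a, s_1(a)) < \pi$.

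The step I expect to require the most care is this bound, which I would obtain by rewriting $f_a$ in amplitude--phase form. Put $A(s) = (\frac{1}{4} - a^2)^\frac{1}{2}(\frac{1}{2} + a\cos 2s)^\frac{1}{2} > 0$ and $B(s) = a\sin 2s$, so that \eqref{eq:definition_f_a_s3} becomes $f_a(s) = A(s)\cos\phi(a,s) + B(s)\sin\phi(a,s) = (A^2 + B^2)^\frac{1}{2}\cos(\phi(a,s) - \delta(s))$, where $\delta(s) = \arctan(B(s)/A(s)) \in (-\frac{\pi}{2}, \frac{\pi}{2})$ because $A > 0$. Hence the zeros of $f_a$ are exactly the points where $\phi(a,s) - \delta(s) \in \frac{\pi}{2} + \pi\bb{Z}$. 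At $s = 0$ one has $\phi(a,0) - \delta(0) = 0$, so $\cos(\phi - \delta) > 0$ there; by continuity $\phi(a,\cdot) - \delta$ remains in $(-\frac{\pi}{2}, \frac{\pi}{2})$ on $[0, s_1(a))$ and first reaches the boundary of that interval at $s_1(a)$. Since $\phi(a, s_1(a)) > 0$ while $|\delta| < \frac{\pi}{2}$, the value $-\frac{\pi}{2}$ is impossible, so $\phi(a, s_1(a)) - \delta(s_1(a)) = \frac{\pi}{2}$, giving $\phi(a, s_1(a)) = \frac{\pi}{2} + \delta(s_1(a)) < \pi$, exactly the bound needed above.
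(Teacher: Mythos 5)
Your proof is correct, but it takes a genuinely different route from the paper's. The paper first disposes of $a = 0$ separately via the Clifford torus, and for $a \neq 0$ its key estimate is $s_1(a) < \frac{\pi}{2}$, obtained from the sign change $f_a(0) > 0$, $f_a(\frac{\pi}{2}) < 0$; this in turn uses $\frac{C_a}{2} \in (\frac{\pi}{2}, \frac{\pi}{\sqrt{2}}]$ from Lemma \ref{lem:properties_of_c_a}, whose upper bound rests on Otsuki's monotonicity of $T(c)$. Injectivity of $\gamma_a$ is then extracted from $z_a(s_*) = z_a(\tilde{s})$ (which forces $s_* = \pm\tilde{s}$ precisely because $z_a$ is nonconstant for $a \neq 0$) together with a sign analysis of the odd function $y_a$. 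You never touch $z_a$ or the $C_a$ estimates: writing \eqref{eq:definition_f_a_s3} in amplitude--phase form $f_a = (A^2 + B^2)^\frac{1}{2}\cos(\phi - \delta)$ with $A > 0$ and $\delta = \arctan(B/A) \in (-\frac{\pi}{2}, \frac{\pi}{2})$, you identify $s_1(a)$ as the first positive time $\phi - \delta$ reaches $\frac{\pi}{2}$ (the value $-\frac{\pi}{2}$ being excluded since $\phi(a, s_1(a)) > 0$), so that $\phi(a, s_1(a)) = \frac{\pi}{2} + \delta(s_1(a)) < \pi$, and injectivity follows because $\phi(a, \cdot)$ is strictly increasing, odd, and sweeps out less than a full period of $(\cos, \sin)$ on $[-s_1(a), s_1(a)]$. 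I checked the details and they hold: $A > 0$ on all of $\bb{R}$ for $|a| < \frac{1}{2}$, so $\delta$ is continuous and the zeros of $f_a$ are exactly the points where $\phi - \delta \in \frac{\pi}{2} + \pi\bb{Z}$. Your argument is uniform in $a$ (no special case at $a = 0$), is independent of Lemma \ref{lem:properties_of_c_a} and hence of \cite{otsuki_1970, otsuki_1993}, and gives the exact phase at the first orthogonal intersection. What it does not provide are the facts $s_1(a) < \frac{\pi}{2}$ and $y_a(s) > 0$ for $s \in (0, \frac{\pi}{2})$, which the paper's proof establishes along the way and then reuses in the proof of Proposition \ref{prop:one_parameter_family_in_ball}; so your argument could replace the printed one only after supplying those facts elsewhere.
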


\begin{proof}
	We already know from Proposition \ref{prop:complete_minimal_immersions} that $X_a: [-s_1(a), s_1(a)] \times \bb{S}^1 \to \bb{S}^3$ is a smooth immersion. Since $[-s_1(a), s_1(a)] \times \bb{S}^1$ is compact, it only remains to show that $X_a: [-s_1(a), s_1(a)] \times \bb{S}^1 \to \bb{S}^3$ is injective.
	
	If $a = 0$, then we have Remark \ref{rem:clifford_torus} and it is easy to check directly that $s_1(0) = \frac{\pi}{2\sqrt{2}}$ and that the map is injective. So assume that $a \neq 0$.
	
	We show that $s_1(a) < \frac{\pi}{2}$. Computing from the definition, $f_a(0) = (\frac{1}{4} - a^2)^\frac{1}{2} (\frac{1}{2} + a)^\frac{1}{2}$, which is positive, and $f_a(\frac{\pi}{2}) = (\frac{1}{4} - a^2)^\frac{1}{2} (\frac{1}{2} - a)^\frac{1}{2} \cos(\frac{C_a}{2})$. But $\frac{C_a}{2} \in (\frac{\pi}{2}, \frac{\pi}{\sqrt{2}}]$, so $f_a(\frac{\pi}{2})$ is negative and $s_1(a) < \frac{\pi}{2}$.
	
	This allows us to show that $y_a(s)$ has the same sign as $s$ for $s \in [-s_1(a), s_1(a)]$. From the definition, $\dot{\phi} = \psi > 0$, so $\phi(a, \cdot)$ is strictly increasing, $\phi(a, 0) = 0$ and, from the proof of Theorem \ref{thm:infinitely_many_fbma}, $\phi(a, \frac{\pi}{2}) = \frac{C_a}{2} \in (\frac{\pi}{2}, \frac{\pi}{\sqrt{2}}]$. Therefore $\phi(a, s) \in (0, \frac{\pi}{\sqrt{2}})$ for $s \in (0, s_1(a)]$ and hence $y_a(s) > 0$ in the same range. Since $y_a$ is odd, the claim follows.
	
	The injectivity of $X_a$ is equivalent to the injectivity of the generating curve $\gamma_a$. Suppose $\gamma_a(s_*) = \gamma_a(\tilde{s})$ for some $s_*, \tilde{s} \in [-s_1(a), s_1(a)]$. Then, in particular, $y_a(s_*) = y_a(\tilde{s})$ and $z_a(s_*) = z_a(\tilde{s})$. From $z_a(s_*) = z_a(\tilde{s})$ we find $\cos 2s_* = \cos 2\tilde{s}$, and since $s_*, \tilde{s} \in (-\frac{\pi}{2}, \frac{\pi}{2})$, either $s_* = \tilde{s}$ or $s_* = -\tilde{s}$. In the first case we are done, so suppose $s_* = -\tilde{s}$. Then $y_a(s_*) = y_a(\tilde{s})$ and the fact that $y_a$ is odd gives $y_a(s_*) = 0$, and the previous paragraph implies that $s_* = 0$, so once again we have $s_* = \tilde{s}$, as required.
\end{proof}

Now we show that each embedded free boundary minimal annulus in the one-parameter family described in the previous proposition is in fact contained in a geodesic ball, and the sign of the parameter determines whether that ball is smaller than, equal to or greater than a hemisphere. See Figure \ref{fig:fbma_positive_negative_a}.

\begin{proposition} \label{prop:one_parameter_family_in_ball}
	Let $\phi_0 = 0$. For each $a \in (-\frac{1}{2}, \frac{1}{2})$, let $s_1(a)$ be the first positive zero of $f_a$. Then the embedded free boundary minimal annulus $X_a: [-s_1(a), s_1(a)] \times \bb{S}^1 \to \bb{S}^3$ is contained in the geodesic ball $B_{r(a)}(p_N) = \bb{S}^3 \cap \{x^1 \ge x_a(s_1(a))\}$, and
	\begin{equation} \label{eq:radius_geodesic_balls}
		\left\{
		\begin{array}{ll}
			r(a) < \frac{\pi}{2}, & \text{if } a \in (-\frac{1}{2}, 0),\\ [5pt]
			r(a) = \frac{\pi}{2}, & \text{if } a = 0,\\ [5pt]
			r(a) > \frac{\pi}{2}, & \text{if } a \in (0, \frac{1}{2}).
		\end{array}
		\right.
	\end{equation}
\end{proposition}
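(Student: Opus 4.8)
The plan is to reduce the entire statement to the behaviour of the single function $x_a$ on $[0, s_1(a)]$, using that $x^1 \circ X_a(s,\theta) = x_a(s)$ is independent of $\theta$. Since the geodesic distance from $p_N = (1,0,0,0)$ to a point $p \in \bb{S}^3$ equals $\arccos(x^1)$, the superlevel set $\bb{S}^3 \cap \{x^1 \ge x_a(s_1(a))\}$ is precisely the ball $B_{r(a)}(p_N)$ with $r(a) = \arccos(x_a(s_1(a)))$. As $\arccos$ is decreasing with $\arccos(0) = \frac{\pi}{2}$, the trichotomy \eqref{eq:radius_geodesic_balls} amounts to determining the sign of $x_a(s_1(a))$, while the containment claim is equivalent to $x_a(s) \ge x_a(s_1(a))$ for all $s \in [-s_1(a), s_1(a)]$. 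Because $\phi_0 = 0$ makes $x_a$ even (as in the proof of Theorem \ref{thm:infinitely_many_fbma}), this last inequality follows once I show $x_a$ is strictly decreasing on $[0, s_1(a)]$. Throughout I abbreviate $s_1 = s_1(a)$ and freely use, from the proof of Proposition \ref{prop:one_parameter_family_embedded}, that $s_1 \in (0, \frac{\pi}{2})$ and $\phi(a, s) \in (0, \frac{\pi}{\sqrt{2}}) \subset (0, \pi)$ for $s \in (0, s_1]$.

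For the sign of $x_a(s_1)$, I write $x_a(s_1) = (\frac{1}{2} - a\cos 2s_1)^{1/2} \cos\phi(a, s_1)$; the first factor is positive, so only $\cos\phi(a, s_1)$ matters. Since $\sin(2s_1) > 0$ and $\sin\phi(a, s_1) > 0$, rearranging $f_a(s_1) = 0$ into
\[
 \left(\tfrac{1}{4} - a^2\right)^{1/2} \left(\tfrac{1}{2} + a\cos 2s_1\right)^{1/2} \cos\phi(a, s_1) = -\,a\sin(2s_1)\sin\phi(a, s_1)
\]
shows that the right-hand side, and hence $\cos\phi(a, s_1)$, has sign opposite to $a$ whenever $a \neq 0$. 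For $a = 0$ I instead use $\phi(0, s_1) = \frac{\pi}{2}$ directly, so that $x_0(s_1) = 0$. This establishes \eqref{eq:radius_geodesic_balls}.

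The monotonicity of $x_a$ is the heart of the matter, and I would base it on two identities. The first is
\[
 \left(\tfrac{1}{2} - a\cos 2s\right)^{1/2} \dot{x}_a(s) = a\sin(2s)\cos\phi(a, s) - \frac{(\tfrac{1}{4} - a^2)^{1/2}}{(\tfrac{1}{2} + a\cos 2s)^{1/2}} \sin\phi(a, s),
\]
obtained by differentiating $x_a$; the second is the minimal-surface equation \eqref{eq:first_component_gamma_ode_s3}, which at any critical point of $x_a$ reduces to $\ddot{x}_a = -2x_a$. At $s = 0$ we have $\dot{x}_a(0) = 0$ and $\ddot{x}_a(0) = -2x_a(0) = -2(\frac{1}{2} - a)^{1/2} < 0$, so $x_a$ decreases immediately to the right of $0$. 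Suppose $\dot{x}_a$ first vanishes at some $s_* \in (0, s_1)$. Then $\dot{x}_a < 0$ on $(0, s_*)$, which together with $\dot{x}_a(s_*) = 0$ forces $\ddot{x}_a(s_*) \ge 0$, i.e. $x_a(s_*) \le 0$. On the other hand, evaluating the first identity at $s_*$ gives $a\sin(2s_*)\cos\phi(a, s_*) > 0$. For $a > 0$ this forces $\cos\phi(a, s_*) > 0$, hence $x_a(s_*) > 0$, contradicting $x_a(s_*) \le 0$; for $a < 0$ it forces $\cos\phi(a, s_*) < 0$, contradicting $\phi(a, s_*) < \phi(a, s_1) < \frac{\pi}{2}$ (which holds since $\cos\phi(a, s_1) > 0$ for $a < 0$ by the previous paragraph); and for $a = 0$ the left-hand side vanishes while the right-hand side is positive. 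Every case is contradictory, so $\dot{x}_a < 0$ on all of $(0, s_1)$, $x_a$ is strictly decreasing on $[0, s_1]$, and the containment follows.

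I expect the monotonicity step, specifically the case $a > 0$, to be the main obstacle: there the two terms of $\dot{x}_a$ have opposite signs and genuinely compete, so no naive sign count suffices. The decisive mechanism is that the minimal-surface equation pins $\ddot{x}_a = -2x_a$ at an interior critical point, converting ``first critical point reached from a decreasing regime'' into a sign constraint on $x_a(s_*)$ that is incompatible with the constraint forced by $f_a$. This interplay is exactly what prevents $x_a$ from dipping below $x_a(s_1)$ in the interior.
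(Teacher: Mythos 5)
Your proof is correct, but the containment step takes a genuinely different route from the paper's. The paper never proves that $x_a$ is monotone on all of $[0, s_1(a)]$ when $a \neq 0$: it first shows $\cos\phi(a,\cdot)$, hence $x_a$, has exactly one zero $s_*$ in $[0,\frac{\pi}{2}]$, determines the sign of $x_a(s_1(a))$ from $f_a(s_1(a))=0$ exactly as you do, and then (say for $a>0$) observes via the identity \eqref{eq:derivative_first_coordinate_s3} that $x_a(s)<0$ forces $\dot{x}_a(s)<0$; the bound $x_a \ge x_a(s_1(a))$ is then automatic on $[0,s_*]$ (where $x_a\ge 0 > x_a(s_1(a))$) and follows from strict decrease only on $(s_*, s_1(a)]$. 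You instead rule out \emph{any} interior critical point of $x_a$ in $(0,s_1(a))$ by playing the first-order identity for $\dot{x}_a$ (which at a critical point forces $a\sin(2s_*)\cos\phi(a,s_*)>0$) against the second-order minimal-surface equation \eqref{eq:first_component_gamma_ode_s3} (which at a first critical point reached from a decreasing regime forces $x_a(s_*)\le 0$), thereby proving the strictly stronger statement that $x_a$ is strictly decreasing on all of $[0,s_1(a)]$ for every $a$. Your argument buys a cleaner, case-uniform monotonicity statement (and extra geometric information about the generating curve) at the cost of invoking the ODE $\ddot{x}_a=-2x_a$ at critical points and a slightly more delicate "first zero of $\dot{x}_a$" bookkeeping (which you handle correctly, since $\ddot{x}_a(0)<0$ prevents critical points from accumulating at $0$); the paper's argument is more pedestrian but needs only sign counting and the single first-order identity. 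Both proofs agree on the trichotomy \eqref{eq:radius_geodesic_balls}, which in each case reduces to the sign of $\cos\phi(a,s_1(a))$ read off from $f_a(s_1(a))=0$.
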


\begin{proof}
	When $a = 0$ the result is immediate from Remark \ref{rem:clifford_torus}.
	
	\textbf{Part 1:} We show that $x_a$ has exactly one zero in $[0, \frac{\pi}{2}]$.
	As $s$ increases from $0$ to $\frac{\pi}{2}$, $\phi(a, s)$ increases strictly from $0$ to $\phi(a, \frac{\pi}{2}) \in (\frac{\pi}{2}, \frac{\pi}{\sqrt{2}}]$, so $\cos\phi(a, s)$ decreases strictly from $1$ to $\cos\phi(a, \frac{\pi}{2})$, which is negative. Therefore, $\cos\phi(a, s)$ has exactly one zero in $[0, \frac{\pi}{2}]$ and the same is true of $x_a$.
	
	\textbf{Part 2:} Let $a > 0$.
	
	\textbf{Part 2.1:} We prove that if $s \in (0, \frac{\pi}{2})$ and $x_a(s) < 0$, then $\dot{x}_a(s) < 0$.
	To do this, compute $\dot{x}_a$ and express it as
	\begin{equation} \label{eq:derivative_first_coordinate_s3}
		\dot{x}_a(s) = \left(\frac{1}{2} - a\cos 2s \right) a \sin(2s) x_a(s) - y_a(s) \dot{\phi}(a, s).
	\end{equation}
	As in the proof of Proposition \ref{prop:one_parameter_family_embedded}, $\dot{\phi} > 0$ and $y_a(s) > 0$ when $s \in (0, \frac{\pi}{2})$, so the second term in \eqref{eq:derivative_first_coordinate_s3} is positive. Also, $\sin(2s) > 0$, so that if $x_a < 0$, then the first term in \eqref{eq:derivative_first_coordinate_s3} is negative, and hence $\dot{x}_a < 0$.
	
	\textbf{Part 2.2:} We show that $x_a(s_1(a)) < 0$.
	Multiplying $f_a$ by $(\frac{1}{2} - a\cos 2s)^\frac{1}{2}$ gives
	\begin{equation}
		\left(\frac{1}{2} - a\cos 2s \right)^\frac{1}{2} f_a(s) = a \sin(2s) y_a(s) + \left(\frac{1}{4} - a^2 \right)^\frac{1}{2} z_a(s) x_a(s),
	\end{equation}
	and since $f_a(s_1(a)) = 0$ by definition, we have
	\begin{equation} \label{eq:first_orthogonal_intersection_s3}
		a \sin(2s_1) \, y_a(s_1) + \left(\frac{1}{4} - a^2 \right)^\frac{1}{2} z_a(s_1) \, x_a(s_1) = 0.
	\end{equation}
	From the proof of Proposition \ref{prop:one_parameter_family_embedded}, $s_1(a) < \frac{\pi}{2}$. Then, as in part 2.1, the first term in \eqref{eq:first_orthogonal_intersection_s3} is positive, so the second term is negative, so $x_a(s_1(a)) < 0$.
	
	\textbf{Part 2.3:} We prove that $x_a(s) \ge x_a(s_1(a))$ for $s \in [-s_1(a), s_1(a)]$.
	$x_a$ is even, so it suffices to consider $s \in [0, s_1(a)]$. We know that $x_a(0) = (\frac{1}{2} - a)^\frac{1}{2} > 0$ and, by part 2.2, $x_a(s_1(a)) < 0$. So $x_a$ has a zero $s_* \in (0, s_1(a))$, and by part 1, this is the only zero of $x_a$ in $[0, \frac{\pi}{2}]$. Therefore $x_a \ge 0$ on $[0, s_*]$ and $x_a < 0$ on $(s_*, s_1(a)]$. This proves the claim for $s \in [0, s_*]$, and part 2.1 shows that $x_a$ is strictly decreasing on $(s_*, s_1(a)]$, proving the claim in this interval too. This concludes the case of positive $a$.
	
	\textbf{Part 3:} Let $a < 0$. Now we make the necessary changes to deal with negative $a$. Arguing as in part 2.1 shows that if $s \in (0, \frac{\pi}{2})$ and $x_a(s) > 0$, then $\dot{x}_a(s) < 0$. Then the argument of part 2.2 implies that $x_a(s_1(a)) > 0$, in contrast to the positive $a$ case. Finally, as in part 2.3, we consider $s \in [0, s_1(a)]$. Now both $x_a(0)$ and $x_a(s_1(a))$ are positive, so the only zero of $x_a$ must be $s_* \in (s_1(a), \frac{\pi}{2}]$. But then $x_a$ is positive in $[0, s_1(a)]$, so it is strictly decreasing, proving the result for negative $a$.
	
	\eqref{eq:radius_geodesic_balls} follows from part 2.3 and its counterpart for negative $a$.
\end{proof}

\begin{figure}
	\centering
	\includegraphics[
	width=0.7\textwidth
	]{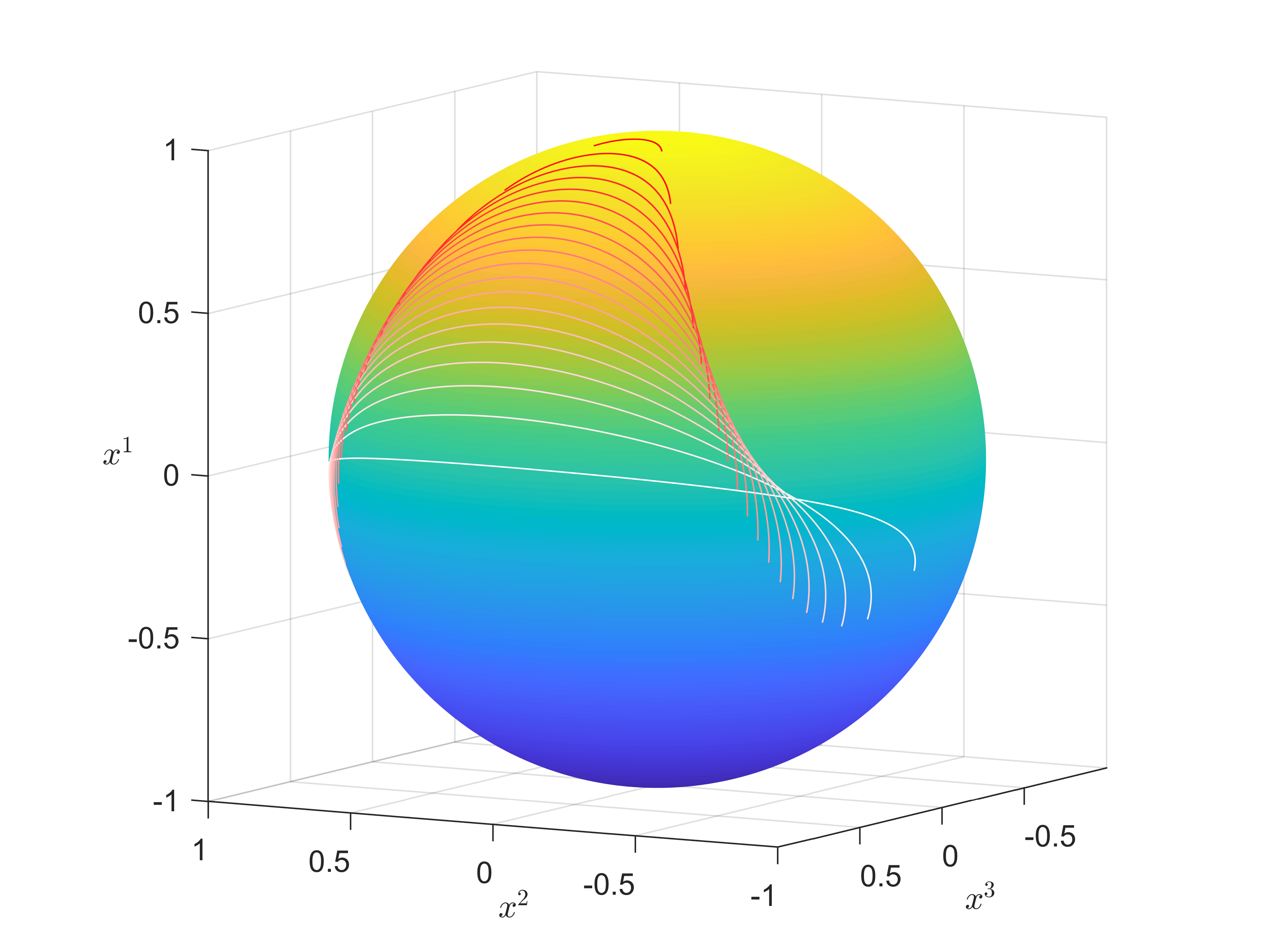}
	\caption{Generating curve segments $\gamma_a$ for twenty members of the one-parameter family of embedded free boundary minimal annuli described in Proposition \ref{prop:one_parameter_family_in_ball}, where $a$ varies linearly from $-0.49$ (red) to $0.49$ (white)}
	\label{fig:fbma_positive_negative_a}
\end{figure}

\begin{remark} \label{rem:li_xiong_converse}
	Let $\phi_0 = 0$. In \cite[Remark 9]{li_xiong_2018}, Li and Xiong suggest that for each $r \in (0, \frac{\pi}{2})$, there is some $a \in (-\frac{1}{2}, 0)$ and some $s_* > 0$ such that $X_a: [-s_*, s_*] \times \bb{S}^1 \to \bb{S}^3$ is a free boundary minimal annulus in $B_r(p_N) \subset \bb{S}^3$. This is then used by Lima and Menezes \cite{lima_menezes_2023} and Medvedev \cite{medvedev_2023}.
	
	Our Proposition \ref{prop:one_parameter_family_in_ball} implies that, for each $a \in (-\frac{1}{2}, 0)$, there is some $r(a) \in (0, \frac{\pi}{2})$ and some $s_1(a) > 0$ such that $X_a: [-s_1(a), s_1(a)] \times \bb{S}^1 \to \bb{S}^3$ is a free boundary minimal annulus in $B_{r(a)}(p_N) \subset \bb{S}^3$. Since these surfaces are known to have zero mean curvature from do Carmo and Dajczer \cite{doCarmo_dajczer_1983}, the key things that have to be proved are the existence of an orthogonal intersection with some geodesic sphere $\partial B_{r(a)}(p_N)$ \emph{and} the fact that the surface remains inside the corresponding geodesic ball $B_{r(a)}(p_N)$. While the former follows fairly easily, the latter is not obvious, as can be seen from the second plot of Figure \ref{fig:otsuki_2_3_phi0_0_fbma}. There we have an example of a free boundary minimal annulus with boundary in a geodesic sphere $\partial B_r(p_N)$ that is not contained in either of the two geodesic balls bounded by $\partial B_r(p_N)$.
	
	In particular, \cite[Theorem 4]{lima_menezes_2023} relies crucially on the fact that $X_a: [-s_1(a), s_1(a)] \times \bb{S}^1 \to \bb{S}^3$ maps into $B_{r(a)}(p_N)$, and therefore depends on our Proposition \ref{prop:one_parameter_family_in_ball}. Similarly, Medvedev's index calculation \cite[Theorem 1.3]{medvedev_2023} depends on \cite[Theorem 4]{lima_menezes_2023}, and hence also on Proposition \ref{prop:one_parameter_family_in_ball}.
	
	Finally, we note that to fully establish the claim in \cite[Remark 9]{li_xiong_2018}, we would have to show that $r: (-\frac{1}{2}, 0) \to (0, \frac{\pi}{2})$ given by $r(a) = \arccos x_a(s_1(a))$ is bijective (or at least surjective). We were not able to show this, despite there being strong numerical evidence that it is true.
\end{remark}

\section{Free boundary minimal annuli of revolution in Otsuki tori} \label{sec:otsuki}

We now turn to Otsuki tori and show that each Otsuki torus contains an abundance of immersed free boundary minimal annuli. In proving this result it will be important to consider the symmetries of various functions of a real variable. Given a function $u: \bb{R} \to \bb{R}$ and a point $x_0 \in \bb{R}$, we will say that $u$ is odd about $(x_0, u(x_0))$ if $\frac{u(2x_0 - x) + u(x)}{2} = u(x_0)$ for all $x$.

\begin{theorem} \label{thm:counting_fbma_in_otsuki_tori}
	Let $p, q \in \bb{N}$, $p, q$ coprime, with $\frac{1}{2} < \frac{p}{q} < \frac{1}{\sqrt{2}}$, and let $a \in (0, \frac{1}{2})$ be the unique solution to $C_a = \frac{2p\pi}{q}$.
	
	If $q$ is odd, then $\Sigma_a(0)$ contains at least $2p$ immersed free boundary minimal annuli with boundary in a geodesic sphere, and $\Sigma_a(\frac{\pi}{2})$ contains at least 2 isometric immersed free boundary minimal annuli with boundary in a geodesic sphere.
	
	If $q$ is even, then $\Sigma_a(0)$ contains at least $2p$ immersed free boundary minimal annuli with boundary in a geodesic sphere, isometric in pairs, and $\Sigma_a(\frac{\pi}{q})$ contains at least $2p$ immersed free boundary minimal annuli with boundary in a geodesic sphere, isometric in pairs.
\end{theorem}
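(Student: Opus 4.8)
The plan is to produce each annulus by exhibiting, via Lemma \ref{lem:characterization_orthogonal_intersections}, two zeros $s_1 < s_2$ of $f_a$ with $x_a(s_1) = x_a(s_2)$, and to organize these pairs using the ambient isometries of $\bb{S}^3$ that preserve the foliation by geodesic spheres centered at $p_N$, i.e.\ those fixing or reversing the coordinate $x^1$ (note that $\{x^1 = c\}$ is the geodesic sphere about $p_N$ of radius $\arccos c$, so $\{x^1 = -c\}$ is again such a sphere). Three isometries are relevant: the reflection $R_0\colon x^2 \mapsto -x^2$, which preserves $x^1$ and the hemisphere $\{x^3 > 0\}$; the reflection $R'\colon x^1 \mapsto -x^1$; and the rotation $\rho\colon (x^1,x^2) \mapsto (-x^1,-x^2)$. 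I would use $R_0$ to generate annuli symmetric across a geodesic sphere, and $R'$ and $\rho$ to match annuli at heights $x^1 = c$ and $x^1 = -c$, producing the asserted isometric pairs.

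The counting rests on two computations. First, exactly as in the proof of Theorem \ref{thm:infinitely_many_fbma} (the factor $\sin(k\pi) = 0$ kills the first term of $f_a$) one gets $f_a(k\pi/2) = (\tfrac14 - a^2)^{1/2}(\tfrac12 + a(-1)^k)^{1/2}\cos(\phi_0 + kp\pi/q)$, so the sign of $f_a$ at $s = k\pi/2$ is that of $\cos(\phi_0 + kp\pi/q)$; since $C_a = 2p\pi/q$ with $\gcd(p,q)=1$ forces $\gamma_a$ to have least period $q\pi$ (Lemma \ref{lem:immersed_torus_rational_pi}), letting $k$ run over a half-period advances the phase by $p\pi$ and hence produces at least $p$ sign changes, giving at least $p$ zeros of $f_a$ per half-period and $2p$ per period. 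Second, I would check that $R_0$ realizes the reflection $s \mapsto k\pi - s$ of $\gamma_a$ as an ambient symmetry preserving every sphere $\{x^1 = \text{const}\}$ precisely when $2\phi_0 + kC_a \in 2\pi\bb{Z}$; writing $\phi_0 = m\pi/q$ this reads $k \equiv -mp^{-1} \pmod q$, which is solvable iff $\phi_0 \in \tfrac{\pi}{q}\bb{Z}$ and then yields exactly two reflection centers $c$ and $c + q\pi/2$ per period. Reflecting the zeros of a half-period across such a center produces at least $p$ annuli about each center, hence $2p$ symmetric annuli.

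With these in hand the cases follow. For $\phi_0 = 0$ the centers are $0$ and $q\pi/2$ for both parities of $q$, giving the annuli $[-s_i,s_i]$ and $[s_i, q\pi - s_i]$; when $q$ is even the half-period shift $s \mapsto s + q\pi/2$ is realized by $\rho$ (here $z_a$ is $q\pi/2$-periodic and $\phi$ increases by $p\pi$, with $p$ odd), so $\rho$ carries the center-$0$ family onto the center-$q\pi/2$ family and exhibits the $p$ isometric pairs. The case $\phi_0 = \pi/q$, $q$ even is identical, with centers governed by $k \equiv -p^{-1}\pmod q$ and the same $\rho$-pairing. When $q$ is odd and $\phi_0 = \pi/2$ there is no reflection center, since $2\phi_0 + kC_a \in 2\pi\bb{Z}$ would force $2kp/q$ to be an odd integer while divisibility (with $q$ odd) makes it even; instead $x_a$ is now odd, so $R'$ realizes $s \mapsto -s$ and sends a free boundary annulus at height $c$ to one at height $-c$. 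Here $f_a(0) = f_a(q\pi/2) = 0$ and $x_a(0) = x_a(q\pi/2) = 0$, so $[0, q\pi/2]$ is a free boundary minimal annulus with boundary in the great sphere $\{x^1 = 0\}$, and $R'$ produces its mirror $[-q\pi/2, 0]$; these two are the claimed isometric pair.

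The main obstacle I anticipate is not existence but the \emph{distinctness as subsets} of the immersed torus, because $\Sigma_a(\phi_0)$ generically self-intersects and a priori distinct parameter arcs could have equal images. I would resolve this through midpoints: a proper sub-arc of the circle $\bb{R}/q\pi\bb{Z}$ has a well-defined center, so annuli centered at distinct points are distinct, and within one family the half-widths are distinct; in the odd case I would note $x_a < 0$ just past $s = 0$ while its $R'$-mirror has $x_a > 0$ there, so the two annuli genuinely differ. The remaining points — that the centers are never themselves zeros of $f_a$ (so the zeros lie strictly interior to each half-period), that each half-period really contributes the counted sign changes, and that $R_0$, $R'$, $\rho$ map the relevant geodesic spheres to geodesic spheres centered at $p_N$ — are routine verifications from the explicit formulas of Proposition \ref{prop:complete_minimal_immersions} and the range $\tfrac{C_a}{2} \in (\tfrac{\pi}{2}, \tfrac{\pi}{\sqrt 2}]$ of Lemma \ref{lem:properties_of_c_a}.
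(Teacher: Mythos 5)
Your proposal is correct in substance and shares the paper's core architecture: both arguments evaluate $f_a$ at $s = k\pi/2$ to reduce the zero count to sign changes of the sequence $\cos(\phi_0 + kp\pi/q)$ (the paper's Part 1 carries out carefully the bookkeeping you defer, namely that each of the $2p$ zeros of $\cos\theta$ swept out by the phase yields either an exact zero or a sign change of the sequence, but never two of them in one increment since $p\pi/q < \pi$), and both then use the evenness of $f_a$ and $x_a$ about $s=0$ and $s=q\pi/2$ to assemble the annuli $[-s_i,s_i]$ and $[s_i,q\pi-s_i]$, with the rotation $R(\pi)$ realizing $s\mapsto s+q\pi/2$ to produce the isometric pairs when $q$ is even, and the reflection $x^1\mapsto -x^1$ for the two annuli of $\Sigma_a(\frac{\pi}{2})$. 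Where you genuinely diverge is the case $\phi_0 = \frac{\pi}{q}$, $q$ even: the paper proves the set identity $\Sigma_a(\frac{\pi}{q}) = \Sigma_{-a}(0)$ (via $\gamma_a(\phi_0, s+\pi) = R(\frac{2p\pi}{q})\gamma_a(\phi_0,s)$ and a choice of $n$ with $np\equiv 1 \pmod{2q}$) and then quotes the $\phi_0=0$ argument verbatim, whereas you locate the reflection centers of $\Sigma_a(\frac{\pi}{q})$ directly from the criterion $2\phi_0 + kC_a \in 2\pi\bb{Z}$, i.e.\ $k\equiv -p^{-1}\pmod q$, and rerun the centered argument there. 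Your route is more uniform and explains \emph{why} the value $\phi_0=\frac{\pi}{q}$ is distinguished (it is exactly the condition $\phi_0\in\frac{\pi}{q}\bb{Z}$ for reflection centers to exist, and your parity computation for $\phi_0=\frac{\pi}{2}$, $q$ odd, cleanly explains why that case degenerates to only two annuli); the paper's route has the side benefit of identifying $\Sigma_a(\frac{\pi}{q})$ with a surface already understood. One point you should make explicit rather than assert: for the $\rho$-pairing to match annuli to annuli you need the zeros of $f_a$ in a half-period to be symmetric about the quarter-period point (the paper gets this from $f_a$ being odd about $(\frac{q\pi}{4},0)$); in your framework it does follow, by combining the zero-set invariance under $s\mapsto s+\frac{q\pi}{2}$ (from the $\rho$-shift, since $\rho$ permutes the geodesic spheres centered at $p_N$ and preserves orthogonality) with evenness about the center, but as written ``$\rho$ carries the center-$0$ family onto the center-$\frac{q\pi}{2}$ family'' is the conclusion, not its proof.
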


\begin{proof}
	As in the proof of Lemma \ref{lem:immersed_torus_rational_pi}, $\phi(a, s + \pi) = \phi(a, s) + \frac{2p\pi}{q}$ for all $s$, so that $X_a$ and $f_a$ are $q\pi$-periodic in $s$. Therefore we consider $X_a$ and $f_a$ to be defined on $\bb{R}/q\pi\bb{Z}$ and will pick representative intervals appropriate to the situation.
	
	\textbf{Part 1:} We start by showing that $f_a$ has at least $2p$ zeros in $\bb{R}/q\pi\bb{Z} = [-\frac{q\pi}{2}, \frac{q\pi}{2})$, independently of the value of $\phi_0$.
	
	Since $C_a = \frac{2p\pi}{q}$, we have $\phi(a, \frac{k\pi}{2}) = \phi_0 + k\frac{p\pi}{q}$ for $k \in \bb{Z}$, as in the proof of Theorem \ref{thm:infinitely_many_fbma}, and then
	\begin{equation}
		f_a \left(\frac{k\pi}{2} \right) = \left(\frac{1}{4} - a^2 \right)^\frac{1}{2} \left(\frac{1}{2} + a (-1)^k \right)^\frac{1}{2} \cos \left(\phi_0 + k\frac{p\pi}{q} \right)
	\end{equation}
	for any $k \in \bb{Z}$. Note that if $k \in \bb{Z}$ is such that $\cos(\phi_0 + k\frac{p\pi}{q}) = 0$, then $f_a(\frac{k\pi}{2}) = 0$; and if $k \in \bb{Z}$ is such that $\cos(\phi_0 + k\frac{p\pi}{q}) \cdot \cos(\phi_0 + (k+1)\frac{p\pi}{q}) < 0$, then $f_a(\frac{k\pi}{2}) \cdot f_a(\frac{(k+1)\pi}{2}) < 0$, so $f_a$ has a zero in $(\frac{k\pi}{2}, \frac{(k+1)\pi}{2})$.
	
	Suppose $\cos(\phi_0 - p\pi) \neq 0$. We claim that the number of zeros plus the number of sign changes (i.e., consecutive entries whose product is negative) in the finite sequence
	\begin{equation} \label{eq:otsuki_cosine_sequence}
		\cos \left(\phi_0 + k\frac{p\pi}{q} \right), \quad k \in \{-q, -q+1, \dots, q\},
	\end{equation}
	is $2p$. To see this, note that the continuous function $\cos\theta$ has exactly $2p$ zeros in $[\phi_0 - p\pi, \phi_0 + p\pi]$, for its initial value is nonzero by hypothesis and the total change in the argument is $2p\pi$, that is, $p$ full rotations around the unit circle. These zeros occur at $\theta = (i + \frac{1}{2})\pi$ for $i \in \bb{Z}$ such that $(i + \frac{1}{2})\pi \in (\phi_0 - p\pi, \phi_0 + p\pi)$.
	
	But note that $\phi_0 + k\frac{p\pi}{q}$ is an arithmetic sequence that progresses in increments of $\frac{p\pi}{q} < \pi$, so there is an element of this sequence in every interval of length $\pi$ that intersects $[\phi_0 - p\pi, \phi_0 + p\pi]$. Hence, for each integer $i$ with $(i + \frac{1}{2})\pi \in (\phi_0 - p\pi, \phi_0 + p\pi)$, either $(i + \frac{1}{2})\pi = \phi_0 + k\frac{p\pi}{q}$ for some $k \in \{-q, -q+1, \dots, q\}$, or else there is some $k \in \{-q, -q+1, \dots, q\}$ such that $\phi_0 + k\frac{p\pi}{q} \in ((i-\frac{1}{2})\pi, (i+\frac{1}{2})\pi)$ and $\phi_0 + (k+1)\frac{p\pi}{q} \in ((i+\frac{1}{2})\pi, (i+\frac{3}{2})\pi)$. That is, each of the $2p$ zeros of $\cos\theta$ in $(\phi_0 - p\pi, \phi_0 + p\pi)$ either gives rise to a zero of \eqref{eq:otsuki_cosine_sequence} or gives rise to a sign change in \eqref{eq:otsuki_cosine_sequence}, but not both. This proves the claim and shows that $f_a$ has at least $2p$ zeros in $[-\frac{q\pi}{2}, \frac{q\pi}{2})$.
	
	If $\cos(\phi_0 - p\pi) = 0$, then also $\cos(\phi_0 + p\pi) = 0$, so repeating the argument above gives $2p + 1$ zeros or sign changes of \eqref{eq:otsuki_cosine_sequence}. These correspond to at least $2p + 1$ zeros of $f_a$, except that the last such zero happens at $s = \frac{q\pi}{2}$, which is outside the representative interval we are considering. So again we are left with at least $2p$ zeros of $f_a$ in $[-\frac{q\pi}{2}, \frac{q\pi}{2})$.
	
	\textbf{Part 2:} Suppose $q$ is odd.
	
	If $\phi_0 = 0$, then we have seen in the proof of Theorem \ref{thm:infinitely_many_fbma} that $\phi(a, \cdot)$ is odd and hence $f_a$ and $x_a$ are both even. Since, in this case, $f_a(-\frac{q\pi}{2})$ and $f_a(0)$ are nonzero, part 1 guarantees we have at least $p$ zeros of $f_a$ in $(0, \frac{q\pi}{2})$. Denoting these positive zeros of $f_a$ by $s_1 < s_2 < \dots < s_p$, by Lemma \ref{lem:characterization_orthogonal_intersections} we may parametrize $2p$ immersed free boundary minimal annuli by $X_a:[-s_i, s_i] \times \bb{S}^1 \to \bb{S}^3$ and $X_a:[s_i, q\pi - s_i] \times \bb{S}^1 \to \bb{S}^3$ for each $i = 1, 2, \dots, p$. Note that we made use of two different representative intervals for $\bb{R}/q\pi\bb{Z}$ in the previous line, $[-\frac{q\pi}{2}, \frac{q\pi}{2})$ and $[0, q\pi)$. For each such $i$, the two immersed minimal annuli share their boundary and that boundary lies on the geodesic sphere $\bb{S}^3 \cap \{x^1 = x_a(s_i)\}$. See Figure \ref{fig:otsuki_2_3_phi0_0_fbma}.

	\begin{figure}
		\centering
		\includegraphics[width=\textwidth]{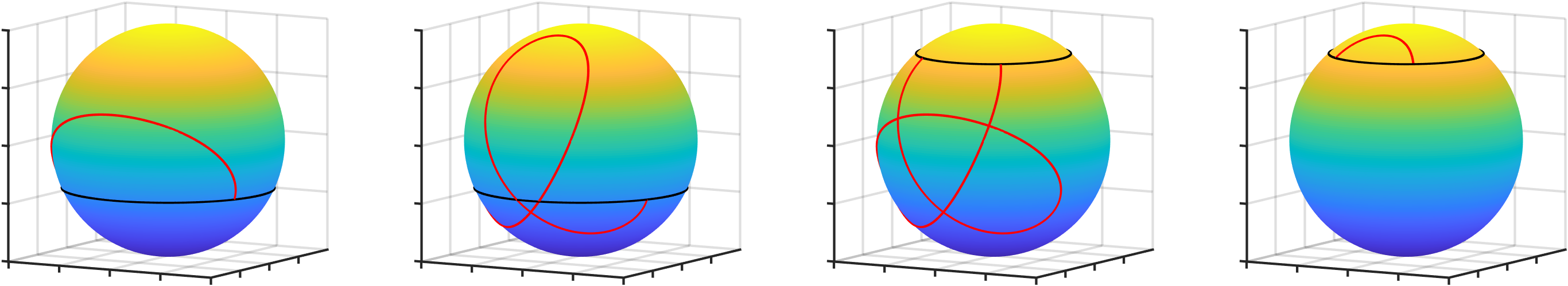}
		\caption{Four segments of the generating curve $\gamma_a$ (red) for the Otsuki torus $\Sigma_a(0)$ corresponding to $\frac{p}{q} = \frac{2}{3}$, each generating one of the immersed free boundary minimal annuli described in Theorem \ref{thm:counting_fbma_in_otsuki_tori}, together with the geodesic spheres $\partial B_r^2(p_N)$ (black) each segment intersects orthogonally}
		\label{fig:otsuki_2_3_phi0_0_fbma}
	\end{figure}
	
	If $\phi_0 = \frac{\pi}{2}$, then $f_a(-\frac{q\pi}{2}) = f_a(0) = 0$ and $x_a(-\frac{q\pi}{2}) = x_a(0) = 0$, so by Lemma \ref{lem:characterization_orthogonal_intersections}, $X_a: [-\frac{q\pi}{2}, 0] \times \bb{S}^1 \to \bb{S}^3$ and $X_a: [0, \frac{q\pi}{2}] \times \bb{S}^1 \to \bb{S}^3$ are 2 immersed free boundary minimal annuli with the same boundary, and that boundary lies on the equator $\bb{S}^3 \cap \{x^1 = 0\}$. Also we see that $\phi(a, -s) = \pi - \phi(a, s)$, which implies that $x_a$ is odd whereas $y_a$ and $z_a$ are even. That is, the point $X_a(-s, \theta)$ is a reflection of $X_a(s, \theta)$ across $x^1 = 0$, and thus the two immersed free boundary minimal annuli in this case are isometric. See Figure \ref{fig:otsuki_2_3_phi0_pi_2_fbma}.
	
	\begin{figure}
		\centering
		\includegraphics[width=\textwidth]{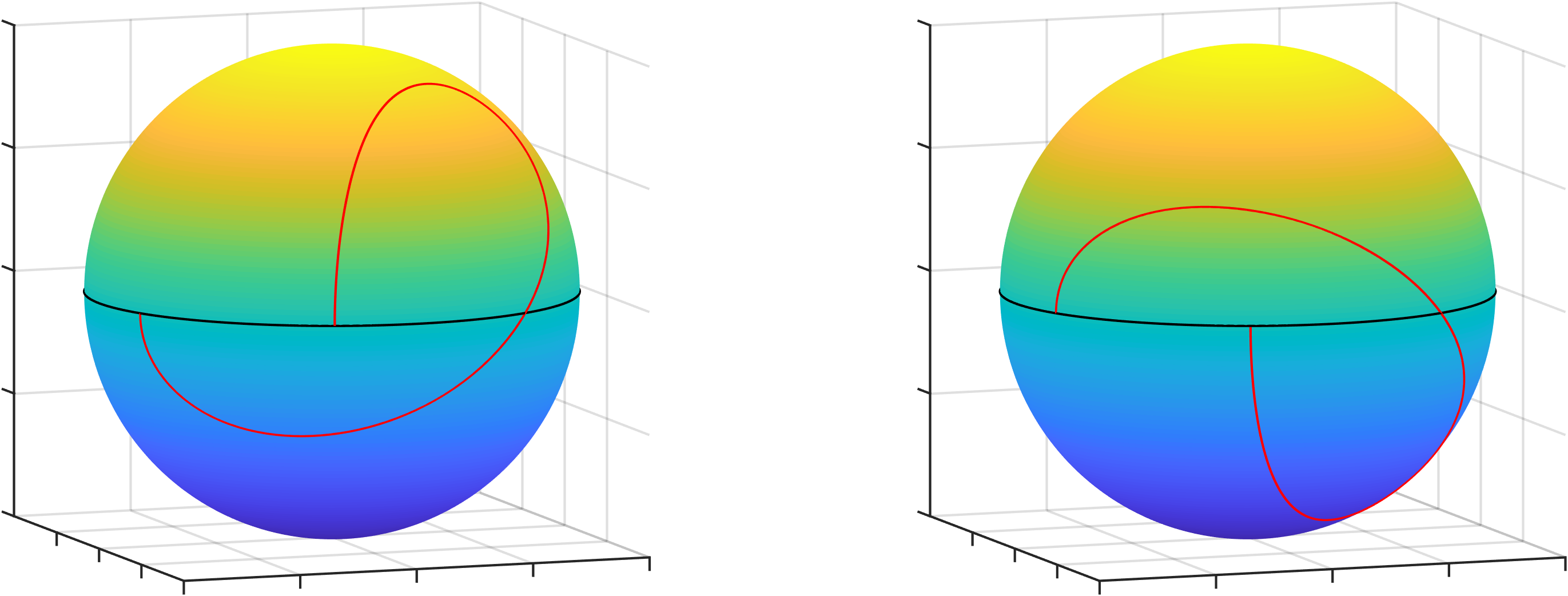}
		\caption{Two segments of the generating curve $\gamma_a$ (red) for the Otsuki torus $\Sigma_a(\frac{\pi}{2})$ corresponding to $\frac{p}{q} = \frac{2}{3}$, each generating one of the immersed free boundary minimal annuli described in Theorem \ref{thm:counting_fbma_in_otsuki_tori}, together with the geodesic sphere $\bb{S}^2 \cap \{x^1 = 0\}$ (black) both segments intersect orthogonally}
		\label{fig:otsuki_2_3_phi0_pi_2_fbma}
	\end{figure}
	
	\textbf{Part 3:} Suppose $q$ is even.
	
	\textbf{Part 3.1:} Let $\phi_0 = 0$.
	
	In this case, as in the beginning of part 2, $f_a$ and $x_a$ are both even, so the zeros of $f_a$ are symmetric about the origin. It turns out that these zeros are also symmetric about $s = \frac{q\pi}{4}$, which we now show.
	
	It is immediate that $\phi_0 = 0$ implies that $\phi(a, \frac{q\pi}{2} - s) = -\phi(a, s) + p\pi$. This, together with the fact that $q$ is now even, implies that $f_a(\frac{q\pi}{2} - s) = -f_a(s)$, that is, $f_a$ is odd about $(\frac{q\pi}{4}, 0)$. In particular, $f_a(\frac{q\pi}{4}) = 0$ and the zeros of $f_a$ in $(0, \frac{q\pi}{2})$ are symmetric about $s = \frac{q\pi}{4}$.
	
	Since, by part 1, we have at least $2p$ zeros of $f_a$ in $[-\frac{q\pi}{2}, \frac{q\pi}{2})$, $f_a(-\frac{q\pi}{2})$ and $f_a(0)$ are nonzero, and $f_a$ is even, we must have at least $p$ zeros of $f_a$ in $(0, \frac{q\pi}{2})$. Then, $f_a(\frac{q\pi}{4}) = 0$ and $f_a$ being odd about $(\frac{q\pi}{4}, 0)$ imply that we have at least $\frac{p-1}{2}$ zeros in $(0, \frac{q\pi}{4})$, which we denote by $s_1 < s_2 < \dots < s_{\frac{p-1}{2}}$. Now let $s_{\frac{p+1}{2}} = \frac{q\pi}{4}$ and $s_{p+1-i} = \frac{q\pi}{2} - s_i$ for $i = 1, \dots, \frac{p-1}{2}$. This gives $p$ zeros of $f_a$ in $(0, \frac{q\pi}{2})$, denoted $s_1 < \dots < s_p$.
	
	As in part 2, Lemma \ref{lem:characterization_orthogonal_intersections} now gives $2p$ immersed free boundary minimal annuli, parametrized by $X_a:[-s_i, s_i] \times \bb{S}^1 \to \bb{S}^3$ and $X_a:[s_i, q\pi - s_i] \times \bb{S}^1 \to \bb{S}^3$, with boundary in the geodesic sphere $\bb{S}^3 \cap \{x^1 = x_a(s_i)\}$, for $i = 1, 2, \dots, p$. We show these are isometric in pairs by proving that $X_a:[-s_i, s_i] \times \bb{S}^1 \to \bb{S}^3$ is a rotation of $X_a:[s_{p+1-i}, q\pi - s_{p+1-i}] \times \bb{S}^1 \to \bb{S}^3$ for $i = 1, 2, \dots, p$.
	
	To see this, note that $p$ must now be odd and $\phi(a, \frac{q\pi}{2} + s) = \phi(a, s) + p\pi$. Letting $R(\beta)$ denote a rotation by $\beta$ radians on the $x^1x^2$-plane, we compute directly that $R(\pi) X_a(\frac{q\pi}{2} + s, \theta) = X_a(s, \theta)$, so that $X_a:[-s_i, s_i] \times \bb{S}^1 \to \bb{S}^3$ is a rotation of $X_a:[\frac{q\pi}{2} - s_i, \frac{q\pi}{2} + s_i] \times \bb{S}^1 \to \bb{S}^3$. But, from the way we defined $s_i$, we have $[\frac{q\pi}{2} - s_i, \frac{q\pi}{2} + s_i] = [s_{p+1-i}, q\pi - s_{p+1-i}]$, proving the claim of the previous paragraph. See Figure \ref{fig:otsuki_5_8_phi0_0_fbma}.

	\begin{figure}
		\centering
		\includegraphics[width=\textwidth]{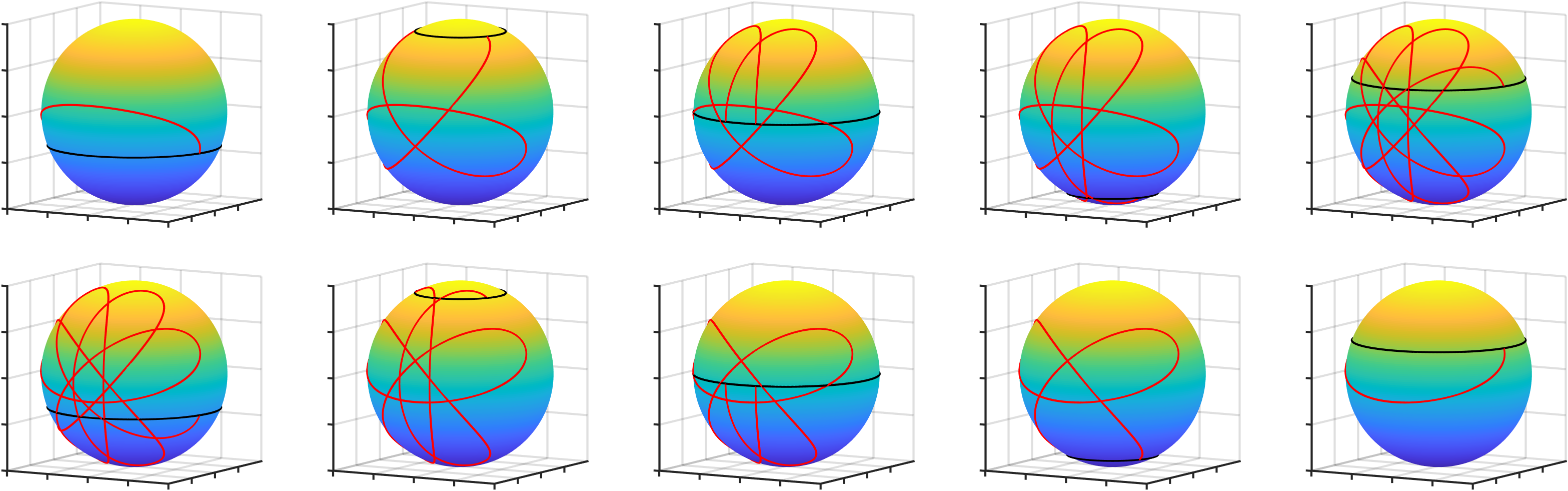}
		\caption{Ten segments of the generating curve $\gamma_a$ (red) for the Otsuki torus $\Sigma_a(0)$ corresponding to $\frac{p}{q} = \frac{5}{8}$, each generating one of the immersed free boundary minimal annuli described in Theorem \ref{thm:counting_fbma_in_otsuki_tori}, together with the geodesic spheres $\partial B_r^2(p_N)$ (black) each segment intersects orthogonally}
		\label{fig:otsuki_5_8_phi0_0_fbma}
	\end{figure}
	
	\textbf{Part 3.2:} Let $\phi_0 = \frac{\pi}{q}$.
	
	As before, part 1 still implies we have at least $2p$ zeros of $f_a$ in $[-\frac{q\pi}{2}, \frac{q\pi}{2})$. However, $f_a$ is in general no longer even, so its zeros are not in general symmetric about the origin and it is not obvious which pairs of zeros, if any, correspond to points on the same geodesic sphere. To overcome this we will show that, despite the different parametrizations, $\Sigma_a(\frac{\pi}{q}) = \Sigma_{-a}(0)$ as sets, and the result will then follow from part 3.1.
	
	We change notation slightly by writing $\gamma_a(\phi_0, s)$ instead of $\gamma_a(s)$ for the generating curve of $\Sigma_a(\phi_0)$. We compute directly that $\gamma_a(\phi_0, s + \pi) = R(\frac{2p\pi}{q}) \gamma_a(\phi_0, s)$ for all $s$, so letting $s$ range over all of $\bb{R}$ we find $\gamma_a(\phi_0, \bb{R}) = R(\frac{2p\pi}{q}) \gamma_a(\phi_0, \bb{R})$. Repeating this $n$ times gives $\gamma_a(\phi_0, \bb{R}) = R(\frac{2np\pi}{q}) \gamma_a(\phi_0, \bb{R})$ for every $n \in \bb{N}$. Now take $n$ to be a solution of $np \equiv 1 \pmod{q}$, and it follows that $\gamma_a(\phi_0, \bb{R}) = R(\frac{2\pi}{q}) \gamma_a(\phi_0, \bb{R})$.
	
	Now we relate $\gamma_a(\phi_0, \bb{R})$ with $\gamma_{-a}(0, \bb{R})$. Computing from the definition,
	\begin{equation} \label{eq:gamma_a_positive_negative_a}
		\gamma_{-a}\left(0, s - \frac{n\pi}{2} \right) = R \left(\phi_0 + \frac{np\pi}{q} \right) \gamma_a(\phi_0, s)
	\end{equation}
	for all $s$ and $n \in \bb{N}$. Using $\phi_0 = \frac{\pi}{q}$ and taking $n$ to be a solution of $np \equiv 1 \pmod{2q}$ yields $\gamma_{-a}(0, s - \frac{n\pi}{2}) = R(\frac{2\pi}{q}) \gamma_a(\frac{\pi}{q}, s)$, so that $\gamma_{-a}(0, \bb{R}) = R(\frac{2\pi}{q}) \gamma_a(\frac{\pi}{q}, \bb{R})$. This, together with the conclusion of the previous paragraph, implies that $\gamma_{-a}(0, \bb{R}) = \gamma_a(\frac{\pi}{q}, \bb{R})$, showing that $\Sigma_{-a}(0) = \Sigma_a(\frac{\pi}{q})$.
	
	We may now apply the argument of part 3.1 verbatim to $\Sigma_{-a}(0)$ as it did not rely on the sign of $a$, concluding the proof. See Figure \ref{fig:otsuki_5_8_phi0_pi_q_fbma}.
	
	\begin{figure}
		\centering
		\includegraphics[width=\textwidth]{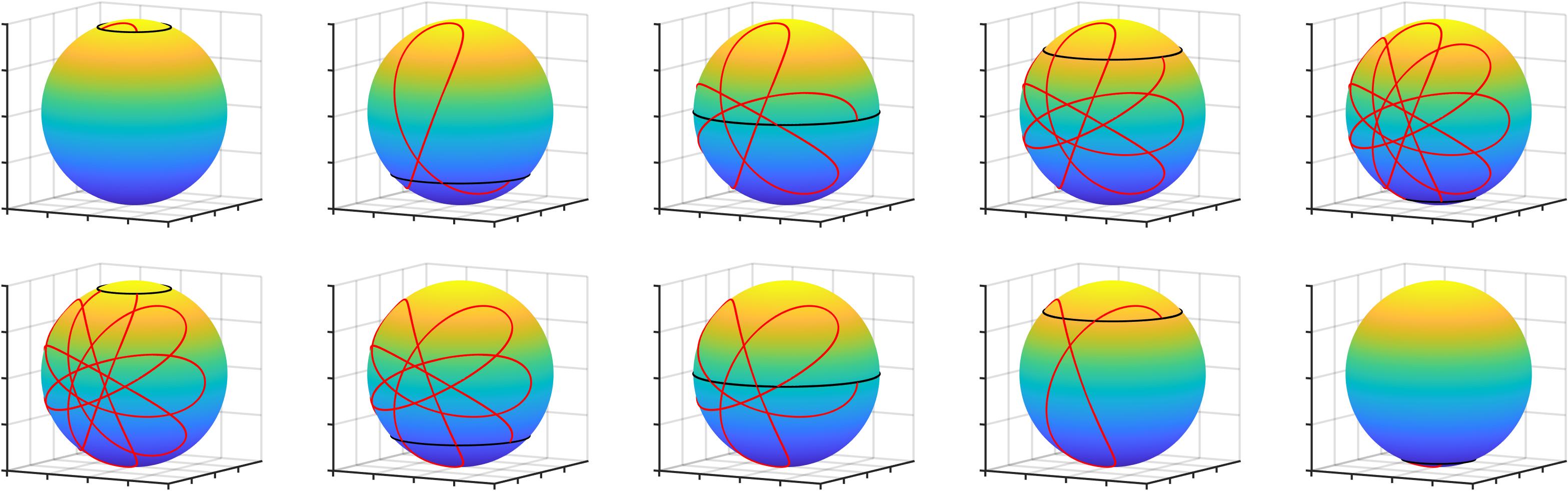}
		\caption{Ten segments of the generating curve $\gamma_a$ (red) for the Otsuki torus $\Sigma_a(\frac{\pi}{8})$ corresponding to $\frac{p}{q} = \frac{5}{8}$, each generating one of the immersed free boundary minimal annuli described in Theorem \ref{thm:counting_fbma_in_otsuki_tori}, together with the geodesic spheres $\partial B_r^2(p_N)$ (black) each segment intersects orthogonally}
		\label{fig:otsuki_5_8_phi0_pi_q_fbma}
	\end{figure}	
\end{proof}

\section{Properties} \label{sec:properties}

The large number of explicit free boundary minimal surfaces we described above opens the door to the investigation of many related questions. While some of these questions have been addressed for free boundary minimal submanifolds in geodesic balls of $\bb{S}^n$ smaller than a hemisphere, here we mention two that generalize easily to larger geodesic balls.

\subsection{Volume and boundary volume}

A very interesting set of isoperimetric inequalities was recently established by Lee and Seo \cite[Theorem 2.7]{lee_seo_2023} for free boundary minimal submanifolds in geodesic balls $B_r(p_N) \subset \bb{S}^n$ with $r < \frac{\pi}{2}$. By following their proof closely one sees that the hypothesis $r < \frac{\pi}{2}$ is not actually necessary, establishing the next proposition.
\begin{proposition} \label{prop:vol_bdry_vol}
	Let $r \in (0, \pi)$ and $\Sigma^k$ be a compact free boundary minimal submanifold of $B_r(p_N) \subset \bb{S}^n$. Then
	\begin{equation} \label{eq:ratio_boundary_volume_over_volume_sphere}
		k \cot(r) \le \frac{|\partial\Sigma|}{|\Sigma|} \le k \left( \frac{1 + \cos(r)}{2\sin(r)} \right).
	\end{equation}
\end{proposition}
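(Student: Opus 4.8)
The plan is to derive everything from a single integral identity obtained by applying the divergence theorem on $\Sigma$ to a radial test function, and then to read off the two bounds by making two different choices of that function. Write $\rho$ for the geodesic distance to $p_N$, so that $x^1 = \cos\rho$ and $\Sigma \subset \{\rho \le r\}$ with $\partial\Sigma \subset \{\rho = r\}$. The minimality of $\Sigma^k$ in $\bb{S}^n$ is the statement $\Delta_\Sigma x^i + k x^i = 0$ (the $k$-dimensional analogue of the computation in the proof of Proposition \ref{prop:complete_minimal_immersions}), which translates into $\Delta_\Sigma \rho = \cot\rho\,(k - |\nabla_\Sigma \rho|^2)$. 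The free boundary condition says that along $\partial\Sigma$ the outward conormal $\nu$ equals the outward unit normal $\nabla_{\bb{S}^n}\rho$ of the geodesic sphere, so that $\partial_\nu \rho = 1$ (equivalently $\partial_\nu x^1 = -\sin r$) there. For any profile $F$, writing $b = |\nabla_\Sigma \rho|^2 \in [0,1]$, a direct computation gives $\Delta_\Sigma F(\rho) = k F'(\rho)\cot\rho + (F''(\rho) - F'(\rho)\cot\rho)\,b$, and integrating over $\Sigma$ produces the master identity
\[
F'(r)\,|\partial\Sigma| = \int_\Sigma \big(k F'(\rho)\cot\rho + (F''(\rho) - F'(\rho)\cot\rho)\,b\big)\,dA.
\]

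For the lower bound I would take $F = -\cos\rho$. Then $F'(\rho) = \sin\rho$, the coefficient $F'' - F'\cot\rho$ vanishes identically, and the identity collapses to $\sin r\,|\partial\Sigma| = k\int_\Sigma \cos\rho\,dA$. Since $\rho \le r$ and cosine is decreasing on $[0,\pi]$, we have $\cos\rho \ge \cos r$ pointwise, so $\sin r\,|\partial\Sigma| \ge k\cos r\,|\Sigma|$, that is, $|\partial\Sigma|/|\Sigma| \ge k\cot r$. Nothing here uses $r < \frac{\pi}{2}$: the only facts invoked are $\sin r > 0$ and $\cos\rho \ge \cos r$, both valid for all $r \in (0,\pi)$. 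For $r \ge \frac{\pi}{2}$ the bound $k\cot r \le 0$ is weaker than the trivial positivity of the ratio, but it is still correct.

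For the upper bound I would keep the same identity but now exploit the estimate $b \le 1$, choosing $F$ so that the surviving integrand is constant. Concretely, I would solve the linear ODE $F'' + (k-1)\cot\rho\,F' = 1$ with the solution that is regular at $\rho = 0$, namely $F'(\rho) = \sin^{1-k}\rho \int_0^\rho \sin^{k-1}\tau\,d\tau$, which is the radial profile of a geodesic ball in $\bb{S}^k$. With this choice $k F'\cot\rho + (F'' - F'\cot\rho) = (k-1)F'\cot\rho + F'' = 1$, so that after applying $b \le 1$ the identity yields $F'(r)\,|\partial\Sigma| \le |\Sigma|$, hence
\[
\frac{|\partial\Sigma|}{|\Sigma|} \le \frac{1}{F'(r)} = \frac{\sin^{k-1} r}{\int_0^r \sin^{k-1}\tau\,d\tau}.
\]
To land on the stated constant I would then verify the elementary inequality $\sin^{k-1}r \big/ \int_0^r \sin^{k-1}\tau\,d\tau \le k\,\frac{1+\cos r}{2\sin r}$; differentiating the difference of the two sides reduces it to the sign of $(k-2)(1-\cos r)$, so it holds for all $k \ge 2$, with equality exactly when $k = 2$ (realized by the totally geodesic disk).

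The main obstacle is the nonnegativity of the coefficient $F'' - F'\cot\rho$ for the upper-bound profile over the entire range $\rho \in [0,r]$, since this is precisely where the hypothesis $r < \frac{\pi}{2}$ enters: one must keep the sign of this term controlled even once $\rho$ passes the equator and $\cos\rho$ turns negative. The plan is to compute $F'' - F'\cot\rho = \sin^{-k}\rho\,H(\rho)$ with $H(\rho) = \sin^k\rho - k\cos\rho\int_0^\rho \sin^{k-1}\tau\,d\tau$, and to observe that $H(0) = 0$ and $H'(\rho) = k\sin\rho\int_0^\rho \sin^{k-1}\tau\,d\tau \ge 0$ throughout $(0,\pi)$. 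Hence $H \ge 0$ on the whole interval, so the required sign condition — and with it the upper bound — survives for every $r \in (0,\pi)$, with no need for $r$ to be at most $\frac{\pi}{2}$.
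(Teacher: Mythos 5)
Your proposal is correct (for $k \ge 2$) and supplies the argument that the paper only asserts: the paper's ``proof'' consists of the remark that Lee--Seo's argument goes through without the hypothesis $r < \frac{\pi}{2}$, whereas you actually carry out the divergence-theorem computation with radial test functions $F(\rho)$ — the choice $F = -\cos\rho$ for the lower bound and the geodesic-ball profile $F'(\rho) = \sin^{1-k}\rho\int_0^\rho\sin^{k-1}\tau\,d\tau$ for the upper bound — and, crucially, you isolate the only place where the equator could cause trouble, namely the sign of $F'' - F'\cot\rho$, and dispose of it via $H(0)=0$, $H'(\rho) = k\sin\rho\int_0^\rho\sin^{k-1}\tau\,d\tau \ge 0$ on all of $(0,\pi)$. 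That is exactly the content of the claim the paper leaves unverified. Two points need tightening. First, the closing elementary inequality: differentiating the difference of the two sides \emph{as you wrote them} does not reduce to the sign of $(k-2)(1-\cos r)$; you should first rewrite the inequality as $\frac{2\sin^k r}{1+\cos r} \le k\int_0^r\sin^{k-1}\tau\,d\tau$, where both sides vanish at $r=0$ and the derivatives differ by $-2^{k-1}(k-2)\sin^{k+1}(\tfrac r2)\cos^{k-3}(\tfrac r2) \le 0$, which is the clean reduction you claim. (Comparing $2\sin^k r$ with $k(1+\cos r)\int_0^r\sin^{k-1}\tau\,d\tau$ directly does not monotonize: that derivative changes sign at $r=\frac{\pi}{2}$.) Second, your own calculation shows the stated constant fails for $k=1$, where the sharp ratio is $1/r > \frac{1+\cos r}{2\sin r}$ (the diameter), so the proposition must be read with $k \ge 2$ — a caveat the paper does not make explicit. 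A minor point worth a sentence in a full write-up: when $p_N \in \Sigma$, both test functions are even real-analytic functions of $\rho$, hence smooth across the center, so the integration by parts is legitimate there.
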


Of course, the lower bound in \eqref{eq:ratio_boundary_volume_over_volume_sphere} is vacuous when $r \ge \frac{\pi}{2}$ as it becomes nonpositive in this range, but the upper bound is interesting for all $r \in (0, \pi)$.

\subsection{Index}

Lima and Menezes \cite[Lemma 1]{lima_menezes_2023} recently proved a lower bound on the Morse index of free boundary minimal hypersurfaces $\Sigma$ in a geodesic ball $B_r(p_N) \subset \bb{S}^n$ with $r < \frac{\pi}{2}$. Their proof relies on the fact that, when $r < \frac{\pi}{2}$, the first eigenfunctions of the Dirichlet-to-Neumann map for the Helmholtz equation on $\Sigma$ are the constants. While that is no longer true when $r \ge \frac{\pi}{2}$, we show that there is a good replacement result for this range.

To do that we need a couple of results from our unpublished report \cite{oliveira_2021}. The first is an eigenvalue characterization of free boundary minimal submanifolds in a geodesic ball $B_r(p_N) \subset \bb{S}^n$, where $r \in (0, \pi)$.
\begin{proposition} \label{prop:fbms_eigenvalue_char_sphere}
	Let $r \in (0, \pi)$ and $\Sigma^k$ be a submanifold of $B_r(p_N) \subset \bb{S}^n$ with nonempty boundary such that $\partial\Sigma \subset \partial B_r(p_N)$. Let $\eta$ be the outward unit conormal to $\Sigma$ along $\partial\Sigma$. Then $\Sigma$ is a free boundary minimal submanifold of $B_r(p_N)$ if and only if
	\begin{equation} \label{eq:bvp_x1_fbms_sphere}
		\left\{
		\begin{array}{ll}
			\Delta_\Sigma x^1 + k x^1 = 0 & \text{in } \Sigma,\\ [5pt]
			\partial_\eta x^1 = -\sin(r) & \text{on } \partial\Sigma,
		\end{array}
		\right.
	\end{equation}
	and
	\begin{equation}
		\left\{
		\begin{array}{ll}
			\Delta_\Sigma x^i + k x^i = 0 & \text{in } \Sigma,\\ [5pt]
			\partial_\eta x^i = \cot(r) x^i & \text{on } \partial\Sigma,
		\end{array}
		\right.
	\end{equation}
	for $i = 2, \dots, n+1$.
\end{proposition}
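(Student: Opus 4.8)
The plan is to reduce both minimality and the free boundary (orthogonality) condition to statements about the ambient coordinate functions $x^1,\dots,x^{n+1}$ restricted to $\Sigma$, exploiting that each $x^i$ is the restriction to $\Sigma$ of the linear function $\langle\,\cdot\,,e_i\rangle$ on $\bb{R}^{n+1}$. The interior equations will encode minimality and the Neumann-type boundary conditions will encode orthogonal intersection; combining the two directions then yields the stated equivalence.

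First I would treat the interior equations via the classical Takahashi-type identity. Writing $X\colon\Sigma\to\bb{R}^{n+1}$ for the position vector and $\Delta_\Sigma X$ for the vector with components $\Delta_\Sigma x^i$, one has $\Delta_\Sigma X=\vec H_{\Sigma\subset\bb{R}^{n+1}}$. Decomposing the mean curvature vector of $\Sigma$ in $\bb{R}^{n+1}$ as the mean curvature vector of $\Sigma$ in $\bb{S}^n$ plus the trace over $T\Sigma$ of the second fundamental form of the totally umbilic inclusion $\bb{S}^n\subset\bb{R}^{n+1}$, and using $\II_{\bb{S}^n}(v,w)=-\langle v,w\rangle X$, gives $\mathrm{tr}_{T\Sigma}\II_{\bb{S}^n}=-kX$ and hence $\Delta_\Sigma X=\vec H_{\Sigma\subset\bb{S}^n}-kX$. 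Therefore $\Delta_\Sigma x^i+kx^i=0$ for every $i=1,\dots,n+1$ if and only if $\vec H_{\Sigma\subset\bb{S}^n}=0$, i.e.\ $\Sigma$ is minimal. This is precisely the interior part shared by both displayed boundary value problems, and it is insensitive to $r$ and to the boundary behaviour.

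Next I would handle the boundary conditions by computing the outward unit conormal $\nu$ to $B_r(p_N)$ along $\partial B_r(p_N)$ inside $\bb{S}^n$, exactly as in the proof of Lemma \ref{lem:characterization_orthogonal_intersections} but in general dimension: normalizing $-\nabla^{\bb{S}^n}x^1$ yields $\nu=(1-(x^1)^2)^{-1/2}\bigl(((x^1)^2-1)\partial_1+x^1\textstyle\sum_{i\ge 2}x^i\partial_i\bigr)$, and on $\partial B_r(p_N)$, where $x^1=\cos r$, one has $(1-(x^1)^2)^{1/2}=\sin r$. Since each $x^i$ is linear and $\eta$ is a tangent vector, $\partial_\eta x^i=\langle e_i,\eta\rangle$, so the two prescribed conditions $\partial_\eta x^1=-\sin r$ and $\partial_\eta x^i=\cot(r)\,x^i$ ($i\ge 2$) are exactly the statements $\langle e_i,\eta\rangle=\langle e_i,\nu\rangle$ for all $i$. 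As $\{e_i\}$ is a basis of $\bb{R}^{n+1}$, these hold simultaneously if and only if $\eta=\nu$, which is precisely the condition that $\Sigma$ meet $\partial B_r(p_N)$ orthogonally.

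Finally I would assemble the equivalences: $\Sigma$ is free boundary minimal if and only if it is minimal and meets $\partial B_r(p_N)$ orthogonally, if and only if the interior equation holds for all $i$ and $\eta=\nu$, if and only if the two displayed boundary value problems hold. I expect the only real care to be needed in the conormal step: keeping track of signs and orientations, and verifying that prescribing all $n+1$ directional derivatives $\partial_\eta x^i$ pins down $\eta$ \emph{uniquely}, so that the boundary conditions are sufficient and not merely necessary for orthogonality. The remainder is routine bookkeeping.
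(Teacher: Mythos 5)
Your proposal is correct, and it is the natural argument: the Takahashi-type identity $\Delta_\Sigma X = \vec H_{\Sigma\subset\bb{S}^n} - kX$ handles the interior equations (this is exactly the identity the paper invokes for $k=2$, $n=3$ in the proof of Proposition \ref{prop:complete_minimal_immersions}), and your computation of the outward conormal $\nu$ to $B_r(p_N)$ reproduces the vector field used in Lemma \ref{lem:characterization_orthogonal_intersections}, with $x^1=\cos r$ on $\partial B_r(p_N)$ turning $\langle e_1,\nu\rangle=-\sin r$ and $\langle e_i,\nu\rangle=\cot(r)x^i$ into the stated Neumann conditions, and the basis argument pinning down $\eta=\nu$. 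The paper itself does not include a proof of this proposition (it defers to the unpublished report \cite{oliveira_2021} and to the analogous \cite[Proposition 1]{lima_menezes_2023}), so there is nothing to contrast with; your write-up fills that gap along the expected lines.
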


The only difference between the above and \cite[Proposition 1]{lima_menezes_2023} is the form of the boundary condition in \eqref{eq:bvp_x1_fbms_sphere}, which here is valid across all $r \in (0, \pi)$. This allows us to give a unified proof of the following balancing result (see \cite[Proposition 11]{oliveira_2021}), without distinguishing cases.
\begin{lemma} \label{lem:balancing}
	Let $r \in (0, \pi)$ and $\Sigma^k$ be a compact free boundary minimal submanifold of $B_r(p_N) \subset \bb{S}^n$. Then
	\begin{equation}
		\int_{\partial\Sigma} x^i = 0
	\end{equation}
	for $i = 2, \dots, n+1$.
\end{lemma}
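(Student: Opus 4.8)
The plan is to apply Green's second identity to the pair of coordinate functions $x^1$ and $x^i$, exploiting the fact that both satisfy the same Helmholtz equation on $\Sigma$ while obeying different boundary conditions on $\partial\Sigma$, as recorded in Proposition \ref{prop:fbms_eigenvalue_char_sphere}. The crucial geometric observation, which I would establish first, is that since $\partial\Sigma \subset \partial B_r(p_N)$ and $p_N = (1, 0, \dots, 0)$, every point of $\partial\Sigma$ lies at geodesic distance $r$ from the north pole, so that $x^1 \equiv \cos(r)$ is \emph{constant} along $\partial\Sigma$.

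Concretely, for each $i = 2, \dots, n+1$, I would write Green's identity
\begin{equation*}
	\int_\Sigma \left( x^1 \Delta_\Sigma x^i - x^i \Delta_\Sigma x^1 \right) = \int_{\partial\Sigma} \left( x^1 \partial_\eta x^i - x^i \partial_\eta x^1 \right).
\end{equation*}
By Proposition \ref{prop:fbms_eigenvalue_char_sphere}, $\Delta_\Sigma x^1 = -k x^1$ and $\Delta_\Sigma x^i = -k x^i$, so the integrand on the left equals $-k x^1 x^i + k x^i x^1 = 0$ and the entire left-hand side vanishes identically.

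It then remains only to evaluate the boundary integral. Substituting the boundary conditions $\partial_\eta x^i = \cot(r) x^i$ and $\partial_\eta x^1 = -\sin(r)$ from Proposition \ref{prop:fbms_eigenvalue_char_sphere}, together with the fact that $x^1 = \cos(r)$ on $\partial\Sigma$, I obtain
\begin{equation*}
	0 = \int_{\partial\Sigma} \left( \cos(r) \cot(r) \, x^i + \sin(r) \, x^i \right) = \left( \frac{\cos^2(r)}{\sin(r)} + \sin(r) \right) \int_{\partial\Sigma} x^i = \frac{1}{\sin(r)} \int_{\partial\Sigma} x^i,
\end{equation*}
where the last equality uses $\cos^2(r) + \sin^2(r) = 1$. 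Since $r \in (0, \pi)$ forces $\sin(r) > 0$, the prefactor $\frac{1}{\sin(r)}$ is nonzero, and I conclude $\int_{\partial\Sigma} x^i = 0$, as claimed.

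There is no serious obstacle here once one recognizes that $x^1$ is constant on $\partial\Sigma$ and that $x^1$ and $x^i$ share the eigenvalue $-k$ in the interior. The one point worth emphasizing is that the uniform boundary condition $\partial_\eta x^1 = -\sin(r)$ supplied by Proposition \ref{prop:fbms_eigenvalue_char_sphere} is precisely what makes this argument run verbatim for every $r \in (0, \pi)$: the coefficient of $\int_{\partial\Sigma} x^i$ collapses to $\frac{1}{\sin(r)}$, which never vanishes on this range, so no distinction between the cases $r < \frac{\pi}{2}$, $r = \frac{\pi}{2}$, and $r > \frac{\pi}{2}$ is required.
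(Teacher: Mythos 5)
Your proof is correct and is essentially the same argument as the paper's: Green's second identity applied to $x^1$ and $x^i$, the interior terms cancelling because both functions satisfy $\Delta_\Sigma u + k u = 0$, and the boundary terms combining via $x^1 = \cos(r)$ on $\partial\Sigma$ and the two Robin/Neumann conditions of Proposition \ref{prop:fbms_eigenvalue_char_sphere} to give the nonvanishing factor $\csc(r)$. The paper merely arranges the same computation starting from $\cos(r)\cot(r)\int_{\partial\Sigma} x^i = \int_{\partial\Sigma} x^1\partial_\eta x^i$ rather than from the identity with zero left-hand side.
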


\begin{proof}
	For any $i = 2,\dots,n+1$,
	\begin{equation}
		\begin{aligned}
			\cos(r) \cot(r) \int_{\partial\Sigma} x^i &= \int_{\partial\Sigma} x^1 \partial_\eta x^i\\
			&= \int_{\partial\Sigma} x^i \partial_\eta x^1 + \int_\Sigma \left( x^1 \Delta_\Sigma x^i - x^i \Delta_\Sigma x^1 \right)\\
			&= -\sin(r) \int_{\partial\Sigma} x^i + \int_\Sigma \left( x^1 (-k x^i) - x^i (-k x^1) \right)\\
			&= -\sin(r) \int_{\partial\Sigma} x^i,
		\end{aligned}
	\end{equation}
	where we have used Green's formulas for the Laplacian and Proposition \ref{prop:fbms_eigenvalue_char_sphere}. The result follows from the fact that $\cos(r) \cot(r) + \sin(r) = \csc(r)$ is never zero for $r \in (0,\pi)$.
\end{proof}

We may now use the previous lemma to extend the index bound of Lima and Menezes \cite[Lemma 1]{lima_menezes_2023} to free boundary minimal hypersurfaces in a geodesic ball $B_r(p_N) \subset \bb{S}^n$ with $r \ge \frac{\pi}{2}$.
\begin{proposition} \label{prop:index_fbm_hypersurfaces}
	Let $\Sigma^{n-1}$ be a compact two-sided free boundary minimal hypersurface of $B_r(p_N) \subset \bb{S}^n$ that is not contained in a hyperplane through the origin. If $r \in (\frac{\pi}{2}, \pi)$, then the Morse index of $\Sigma$ is at least $n$. If $r = \frac{\pi}{2}$, then the Morse index of $\Sigma$ is at least $n+1$.
\end{proposition}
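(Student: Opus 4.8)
The plan is to bound the Morse index from below by exhibiting an explicit subspace of test functions on which the index form is negative definite. Recall that for a two-sided free boundary minimal hypersurface $\Sigma^{n-1} \subset B_r(p_N) \subset \bb{S}^n$, with unit normal $N$ and outward conormal $\eta$, the index form acting on $u \in C^\infty(\Sigma)$ is
\[
	Q(u,u) = \int_\Sigma \left( |\nabla u|^2 - \left(|\II|^2 + (n-1)\right) u^2 \right) - \cot(r) \int_{\partial\Sigma} u^2,
\]
where $(n-1) = \operatorname{Ric}(N,N)$ is the ambient Ricci term and the boundary coefficient $\cot(r)$ is the principal curvature of the umbilic geodesic sphere $\partial B_r(p_N)$ with respect to its outward normal, consistent with the Robin coefficient appearing in Proposition \ref{prop:fbms_eigenvalue_char_sphere}. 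Since the index equals the maximal dimension of a subspace on which $Q$ is negative definite, it suffices to produce such a subspace of the asserted dimension. As test functions I would take the restrictions to $\Sigma$ of the ambient linear functions $u_c = \langle c, x \rangle = \sum_{i=1}^{n+1} c_i x^i$ for $c \in \bb{R}^{n+1}$; the hypothesis that $\Sigma$ is not contained in a hyperplane through the origin says precisely that $c \mapsto u_c$ is injective, that is, that $x^1, \dots, x^{n+1}$ are linearly independent on $\Sigma$.

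Next I would evaluate $Q(u_c,u_c)$ using Proposition \ref{prop:fbms_eigenvalue_char_sphere}. Each $u_c$ satisfies $\Delta_\Sigma u_c + (n-1) u_c = 0$, so integration by parts gives $\int_\Sigma |\nabla u_c|^2 = (n-1)\int_\Sigma u_c^2 + \int_{\partial\Sigma} u_c\, \partial_\eta u_c$ and hence
\[
	Q(u_c,u_c) = -\int_\Sigma |\II|^2 u_c^2 + \int_{\partial\Sigma} u_c\, \partial_\eta u_c - \cot(r) \int_{\partial\Sigma} u_c^2.
\]
On $\partial\Sigma \subset \partial B_r(p_N)$ one has $x^1 = \cos r$, and combining the two boundary conditions of Proposition \ref{prop:fbms_eigenvalue_char_sphere} yields $\partial_\eta u_c = \cot(r)\, u_c - c_1 \csc(r)$ along $\partial\Sigma$. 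Substituting, the two $\cot(r)\int_{\partial\Sigma} u_c^2$ contributions cancel and one is left with $-c_1 \csc(r) \int_{\partial\Sigma} u_c$. Here the balancing Lemma \ref{lem:balancing} is decisive: since $\int_{\partial\Sigma} x^i = 0$ for $i \ge 2$ while $x^1 \equiv \cos r$ on $\partial\Sigma$, we get $\int_{\partial\Sigma} u_c = c_1 \cos(r)\, |\partial\Sigma|$, and therefore
\[
	Q(u_c,u_c) = -\int_\Sigma |\II|^2 u_c^2 - c_1^2 \cot(r)\, |\partial\Sigma|.
\]

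Finally I would read off the two cases from this identity. When $r = \frac{\pi}{2}$ the boundary term vanishes for every $c$, so $Q(u_c,u_c) = -\int_\Sigma |\II|^2 u_c^2 \le 0$ on the full $(n+1)$-dimensional space $\{u_c : c \in \bb{R}^{n+1}\}$; when $r \in (\frac{\pi}{2}, \pi)$ we have $\cot(r) < 0$, so the boundary term is nonnegative and must be suppressed by restricting to $c_1 = 0$, leaving the $n$-dimensional space spanned by $x^2, \dots, x^{n+1}$, on which again $Q(u_c,u_c) = -\int_\Sigma |\II|^2 u_c^2 \le 0$. To upgrade semidefiniteness to definiteness, one uses that a minimal hypersurface of $\bb{S}^n$ is real analytic and, not lying in a hyperplane through the origin, is not totally geodesic; hence $\{|\II| \neq 0\}$ is open and dense, so $\int_\Sigma |\II|^2 u_c^2 = 0$ forces $u_c \equiv 0$ and thus $c = 0$. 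This makes $Q$ negative definite on the respective subspaces, giving index at least $n+1$ when $r = \frac{\pi}{2}$ and at least $n$ when $r \in (\frac{\pi}{2}, \pi)$.

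The main obstacle is exactly the sign of $\cot(r)$: for $r > \frac{\pi}{2}$ the coordinate $x^1$ produces a boundary contribution of the wrong sign and cannot be included among the admissible test directions, which is precisely what costs one dimension and singles out the hemisphere. What makes this mechanism transparent is the balancing Lemma \ref{lem:balancing}, whose role is to annihilate the cross terms $\int_{\partial\Sigma} u_c$ coming from the $x^i$ with $i \ge 2$, so that the final value of $Q$ depends on the boundary only through the single coefficient $c_1^2 \cot(r)$.
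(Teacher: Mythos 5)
Your proposal is correct and follows essentially the same route as the paper: the same linear test functions $\langle c, x\rangle$, the same reduction via Proposition \ref{prop:fbms_eigenvalue_char_sphere} and the balancing Lemma \ref{lem:balancing} to the identity $Q(u_c,u_c) = -\int_\Sigma |\II|^2 u_c^2 - c_1^2\cot(r)\,|\partial\Sigma|$, and the same case analysis on the sign of $\cot(r)$. The only differences are that you derive this identity from scratch where the paper imports it from Lima--Menezes \cite{lima_menezes_2023} (noting only that Lemma \ref{lem:balancing} replaces the first-eigenfunction argument), and that you explicitly justify strict negative definiteness via real analyticity, a point the paper leaves implicit.
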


\begin{proof}
	In \cite[Lemma 1]{lima_menezes_2023} it is shown that, when $r < \frac{\pi}{2}$, the index form evaluated on linear functions reduces to
	\begin{equation} \label{eq:index_form_general}
		S(\varphi_v, \varphi_v) = -\int_\Sigma |\II|^2 \varphi_v^2 \, d\mu_\Sigma - (v^1)^2 \cot(r) |\partial\Sigma|,
	\end{equation}
	where $v \in \bb{R}^{n+1}$ and $\varphi_v(x) = \langle x, v \rangle \in C^\infty(\Sigma)$. There, the authors use the fact that $\varphi = 1$ is a first eigenfunction of the Dirichlet-to-Neumann map for the Helmholtz equation to conclude that $\int_{\partial\Sigma} x^i = 0$ for $i = 2, \dots, n+1$. While it is no longer true that $\varphi = 1$ is a first eigenfunction when $r \ge \frac{\pi}{2}$, the conclusion still holds due to Lemma \ref{lem:balancing}, and therefore \eqref{eq:index_form_general} is in fact valid for any $r \in (0, \pi)$.
	
	Hence, when $r \in (\frac{\pi}{2}, \pi)$, the index form is negative definite on $\{ \varphi_v \colon v \in \bb{R}^{n+1}, v^1 = 0 \}$, an $n$-dimensional space. When $r = \frac{\pi}{2}$, \eqref{eq:index_form_general} becomes $S(\varphi_v, \varphi_v) = -\int_\Sigma |\II|^2 \varphi_v^2 \, d\mu_\Sigma$, which is negative definite on $\{ \varphi_v \colon v \in \bb{R}^{n+1} \}$, an $(n+1)$-dimensional space.
\end{proof}

Shortly after the work of Lima and Menezes \cite{lima_menezes_2023}, Medvedev \cite{medvedev_2023} studied index questions further and in particular showed that the free boundary minimal surfaces of revolution we have described that are contained in a geodesic ball $B_r(p_N) \subset \bb{S}^3$ with $r < \frac{\pi}{2}$ have index 4. It would be interesting to see how much of their work generalizes to larger geodesic balls.

\begin{remark} \label{rem:eigenvalue_char_doesnt_need_ball}
	It should be noted that Proposition \ref{prop:fbms_eigenvalue_char_sphere}, Lemma \ref{lem:balancing} and Proposition \ref{prop:index_fbm_hypersurfaces} apply more generally to free boundary minimal submanifolds $\Sigma \subset \bb{S}^n$ with boundary in a geodesic sphere $\partial B_r(p_N)$, even if $\Sigma$ is not contained in a ball, as long as $\Sigma$ hits the geodesic sphere from the same side along all its boundary components. This means the results of this subsection apply to all the surfaces of Figures \ref{fig:fbma_general_a}, \ref{fig:otsuki_2_3_phi0_0_fbma}, \ref{fig:otsuki_5_8_phi0_0_fbma} and \ref{fig:otsuki_5_8_phi0_pi_q_fbma}, where we may have to apply them to a geodesic ball centered at the south pole instead, but not to the surfaces of Figure \ref{fig:otsuki_2_3_phi0_pi_2_fbma}.
\end{remark}

\noindent \textbf{Acknowledgements.} I thank my supervisor Ailana Fraser for many discussions and for her suggestions for improvement to multiple versions of this manuscript.

\bibliographystyle{amsplain}
\bibliography{references}

\end{document}